\documentclass[11pt]{amsart}

\usepackage[english]{babel}
\usepackage{enumitem}
\usepackage{amsmath,amsthm,amssymb,mathrsfs,amsfonts,mathtools}
\usepackage[colorlinks]{hyperref}
\hypersetup{
  linkcolor=blue,
  citecolor=green,
  pdftitle={Large deviations and applications in homogeneous dynamics},
  pdfauthor={Taehyeong Kim and Ron Mor}
}
\newtheorem{thm}{Theorem}[section]
\newtheorem{lem}[thm]{Lemma}
\newtheorem{cor}[thm]{Corollary}
\newtheorem{prop}[thm]{Proposition}

\theoremstyle{definition}
\newtheorem{defi}[thm]{Definition}
\newtheorem*{defi*}{Definition}

\theoremstyle{remark}
\newtheorem{rem}[thm]{Remark}

\numberwithin{equation}{section}

\newcommand{\R}{\mathbb R}
\newcommand{\Z}{\mathbb Z}
\newcommand{\E}{\mathbb E}
\newcommand{\Prob}{\mathbb P}
\newcommand{\cF}{\mathcal F}
\newcommand{\Ad}{\operatorname{Ad}}
\newcommand{\inj}{\operatorname{inj}}
\newcommand{\htop}{\operatorname{ht}}
\newcommand{\supp}{\operatorname{supp}}
\newcommand{\Lip}{\operatorname{Lip}}
\newcommand{\dd}{\,\mathrm d}

\begin{document}

\title[Large deviations and applications]{Large deviations and applications in homogeneous dynamics}

\date{}

\author{Taehyeong Kim}
\address{Department of Mathematics, Brandeis University, Waltham, MA 02453, USA}
\email{taehyeongkim@brandeis.edu}

\author{Ron Mor}
\address{Department of Mathematics, Yale University, New Haven, CT 06511, USA}
\email{ron.mor@yale.edu}

\subjclass[2020]{Primary 37A50; Secondary 37A17, 37A35, 60F10}

\begin{abstract}
We consider the left translation action of an element $a$ on a finite-volume
semisimple homogeneous space $G/\Gamma$, where $\Ad(a)$ is diagonalizable over
$\mathbb R$.  Assuming exponential mixing, we prove a sub-Gaussian
large-deviation estimate for compactly supported smooth test functions.
As an application, we prove effective uniqueness of the measure of
maximal entropy. We also obtain quantitative pointwise equidistribution for Haar-almost every
point on each full expanding horosphere.
\end{abstract}

\maketitle

\section{Introduction}

\subsection{Background}

Let $(X,\mathcal B,\mu,T)$ be an ergodic measure-preserving system.  For every
$f\in L^1(\mu)$, Birkhoff's ergodic theorem \cite{Birkhoff1931ergodic} gives
\[
 \frac1N\sum_{j=0}^{N-1}f(T^jx)\longrightarrow
 \int_X f\dd\mu
\]
for $\mu$-almost every $x$.  Two natural refinements concern the distribution
of typical fluctuations and the probability of atypical deviations, leading
to central limit theorems and large-deviation estimates, respectively.

Large deviations, the main subject of this paper, concern the exceptional-set
measure
\[
 \mu\left\{x\in X:
 \left|\frac1N\sum_{j=0}^{N-1}f(T^jx)-\int_Xf\dd\mu\right|
 >\varepsilon\right\}.
\]

In probability theory, Cram\'er's theorem \cite{Cramer1938} is a classical
starting point for the study of large deviations of averages of independent
identically distributed random variables.  For further background and
references, see \cite{DemboZeitouni2010}.

In the dynamical setting, the analogous asymptotic question is the
exponential rate of decay of the measure of the exceptional-set above, for fixed $f$ and
$\varepsilon>0$, for large $N$.  Here we seek a finite-time exponential upper bound.
Dynamical large-deviation estimates have been established in uniformly
hyperbolic settings \cite{Kifer1990large,Young1990large} and in nonuniformly
hyperbolic settings \cite{MelbourneNicol2008}.  For a survey of these results
and further references, see \cite{Sharp2016Survey}.

In homogeneous dynamics, Khayutin \cite{Khayutin2017Large} proved
large-deviation estimates in an $S$-arithmetic setting for a semisimple
action at a non-Archimedean place. We establish finite-time sub-Gaussian
estimates for Haar measure under $\Ad$-diagonalizable translations on
real finite-volume homogeneous spaces, assuming exponential mixing.

\subsection{Main theorem}

Let $G$ be a connected semisimple real Lie group, let
$\Gamma<G$ be a lattice, and set
\[
 X=G/\Gamma.
\]
Let $m_X$ be the $G$-invariant probability measure.  Fix $a\in G$ such that
$\Ad(a)$ is diagonalizable over $\mathbb R$, acting on $X$ by left
translation.

All test functions below are real-valued.  Our only dynamical assumption is
exponential mixing.

\begin{defi}\label{exponential-mixing}
The action of $a$ on $X$ is \emph{exponentially mixing} if there are
$\sigma>0$ and $\ell\in\Z_{\geq1}$ such that, for every $n\geq0$ and
$\psi_1,\psi_2\in C_c^\infty(X)$,
\[
 \left|
 \int_X\psi_1(a^nx)\psi_2(x)\dd m_X(x)
 -m_X(\psi_1)m_X(\psi_2)
 \right|
 \ll e^{-\sigma n}
 \|\psi_1\|_{\ell,2}\|\psi_2\|_{\ell,2},
\]
where the Sobolev norm $\|\cdot\|_{\ell,2}$ is defined in
Subsection~\ref{subsec:horospherical}.
\end{defi}

If $G$ has finite center and no compact factors and $\Gamma$ is irreducible, then
$L^2_0(X)$ has a strong spectral gap
\cite[pp.~284--285]{KelmerSarnak2009}.  The resulting exponential decay of
smooth matrix coefficients gives exponential mixing for every $a$ whose
$\Ad(a)$ is diagonalizable over $\mathbb R$ and for which $\{\Ad(a)^n:n\geq0\}$ is unbounded
\cite[Corollary~2.4.4]{KM96}.
For example, this applies to
$X=\operatorname{SL}_d(\mathbb R)/\operatorname{SL}_d(\mathbb Z)$, $d\geq2$,
with $a=\operatorname{diag}(e^{t_1},\ldots,e^{t_d})$, where
$\sum_i t_i=0$ and not all $t_i$ vanish.

Our main theorem gives the following large-deviation estimate for
Birkhoff averages under exponential mixing.

\begin{thm}\label{thm:large-deviation}
Suppose that the action of $a$ on $X$ is exponentially mixing.  Then for
every $\varphi\in C_c^\infty(X)$ there is $c_\varphi>0$ such that, for every
$0<\varepsilon<1$ and every
$N\gg_\varphi\varepsilon^{-2}$,
\begin{equation}\label{eq:main-ld}
 m_X\left\{x:
 \left|\frac1N\sum_{j=0}^{N-1}\varphi(a^jx)-m_X(\varphi)\right|
 >\varepsilon\right\}
 \leq e^{-c_\varphi\varepsilon^2N}.
\end{equation}
\end{thm}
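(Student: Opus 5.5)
Without loss of generality $m_X(\varphi)=0$ and $\|\varphi\|_\infty\le 1$; write $S_N=\sum_{j<N}\varphi\circ a^j$. The plan is a blocking argument that turns exponential mixing into near-independence of well-separated blocks, combined with an exponential Chebyshev bound.

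\textbf{Step 1: blocks and gaps.} Fix a gap length $L\asymp_\varphi \log(1/\varepsilon)$ and a block length $K\asymp L/\varepsilon$. Partition $\{0,\dots,N-1\}$ into consecutive long blocks of length $K$ separated by gaps of length $L$. The gaps contribute at most $NL/(K+L)\le \varepsilon N/4$ to $|S_N|$. It therefore suffices to bound the deviation of the sum of the long-block sums. Split these blocks by parity into two families, so that within each family consecutive blocks are separated by more than $K$. Write the block sums as $B_i=\psi_i\circ a^{t_i}$, where $\psi=\sum_{j<K}\varphi\circ a^j$, and note $|\psi|\le K$.

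\textbf{Step 2: approximate independence via mixing.} For $\lambda\le c/K$ I want to show
\[
 \int_X e^{\lambda\sum_i B_i}\dd m_X\le \Bigl(\int_X e^{\lambda\psi}\dd m_X+\eta\Bigr)^{M},
\]
where $M\approx N/(2K)$ is the number of blocks in one family and $\eta$ is an exponentially small error. The natural route is induction on the number of blocks. At each step, apply Definition~\ref{exponential-mixing} to $\psi_1=e^{\lambda\psi}$ and $\psi_2=e^{\lambda\sum_{i<M}B_i}$ composed with a shift, using $a$-invariance of $m_X$.

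This is the main obstacle. The function $\psi_2$ has Sobolev norm growing exponentially in the total time span, because $\Ad(a)$ expands along the unstable directions, so multiple mixing cannot be applied naively. The fix I would try first is to smooth each block function in the stable and neutral directions at scale $e^{-\sigma' K}$ and in the unstable directions at scale $e^{-\sigma' K}$ transported backward. The aim is to control $\|\psi_2\|_{\ell,2}$ by $e^{O(\ell)\cdot(\text{span})}$ but only polynomially in the $L^2$ sense after conditioning, then absorb this using the gap.

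An alternative that avoids the growth problem is a martingale approach: use the horospherical filtration $\mathcal F_n$, given by partitions into $a^{n}$-translates of local unstable plaques, together with effective equidistribution of expanded horospherical pieces. Exponential mixing implies that on a plaque $P$ of unstable size $r$, the average of $\varphi\circ a^{n}$ over $P$ is within $e^{-\delta n}$ of $0$, uniformly on compact sets. One then writes $S_N$ as a martingale difference sum plus a bounded coboundary error and applies Azuma--Hoeffding. The non-compactness of $X$ is handled by discarding points whose orbit spends too much time high in the cusp, using an exponentially small bound on escape of mass for the height function $\htop$.

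\textbf{Step 3: Chernoff and optimization.} Since $\int\psi=0$ and $|\psi|\le K$, Taylor expansion gives $\int e^{\lambda\psi}\le 1+C\lambda^2\int\psi^2\le e^{C'\lambda^2K}$. Here $\int\psi^2\ll_\varphi K$ because correlations of $\varphi$ are summable, by exponential mixing. Hence $\int e^{\lambda\sum_i B_i}\le e^{C'\lambda^2 N/2+M\eta}$. Markov's inequality with $\lambda=c\varepsilon$ gives a bound of $e^{-c''\varepsilon^2N}$, provided $c\varepsilon\le c/K$, which holds since $K\asymp\varepsilon^{-1}\log(1/\varepsilon)$ up to constants; adjust $\lambda=\varepsilon/(C K\varepsilon)$ if needed. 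The condition $N\gg\varepsilon^{-2}$ absorbs constants and the number of families. Combining the two parity families and the symmetric bound for $-\varphi$ yields \eqref{eq:main-ld}.
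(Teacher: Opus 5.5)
Your primary route has a genuine gap at Step~2. That step is the entire difficulty, and the proposed smoothing does not fix it. Definition~\ref{exponential-mixing} is a two-point estimate in $\|\cdot\|_{\ell,2}$. Since $Z(\varphi\circ a^j)=\bigl((\Ad(a^j)Z)\varphi\bigr)\circ a^j$, every derivative along $\mathfrak h$ costs a factor exponential in $j$. So $\|e^{\lambda\psi}\|_{\ell,2}$ already grows like $e^{c\ell K}$, and the accumulated factor $e^{\lambda\sum_{i<M}B_i}$ grows like $e^{c\ell N}$. The gap only buys $e^{-\sigma L}$ with $L\ll K$, so your error $\eta$ is huge and even the first inductive step fails. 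Mollifying cannot help: approximating $\varphi\circ a^j$ to within $o(1)$ requires resolving scale $e^{-cj}$ along $H$, which reproduces the same norms.

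Step~3 also has a slip. The Taylor bound needs $\lambda K\lesssim1$, and with $K\asymp\varepsilon^{-1}\log(1/\varepsilon)$ this forces $\lambda\lesssim\varepsilon/\log(1/\varepsilon)$. So even granting Step~2 you would get only $e^{-c\varepsilon^2N/\log(1/\varepsilon)}$.

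The paper never tests mixing against a multi-time observable. It randomizes the base point along $H$ by $P_N=\lambda_{-(N-1)}(h_{N-1})\cdots\lambda_{-1}(h_1)h_0$ with independent $h_j\sim\nu$. It returns to Haar measure via $m_X(E)=\int_X\Prob\{P_Nx\in E\}\dd m_X(x)$ and applies Azuma to the Doob martingale $X_k=\E(S_N(P_Nx)\mid\cF_k)$. By linearity, each $X_k$ is a sum over $s$ of single-time averages $\int_H\varphi(a^shP_kx)\dd\mu_k^{(N)}(h)$. Equidistribution is therefore only ever applied to the single function $\varphi$.

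Your alternative is in the right spirit, but as written it omits the two ingredients that make such an argument work.

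First, regularity of the conditioning sets. Atoms of a partition into local unstable plaques have no uniform size or boundary regularity. Proposition~\ref{prop:effective-equidistribution} requires a density supported in $B_1^H$ with controlled $C^\ell$ norm, so it gives no uniform bound $e^{-\delta n}$ for plaque averages. A tiny plaque is not equidistributed until it has been expanded to unit size, so your coboundary need not be bounded. The paper substitutes the smooth random walk above. Because the increments enter at contracting scales, the remaining conditional law $\mu_k^{(N)}$, rescaled by $\lambda_k$, has a density whose $C^\ell$ norm is bounded independently of $N$, $k$, and the revealed coordinates (Lemma~\ref{lem:uniform-densities}).

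Second, the cusp. At a point of height $\htop(x)$ the equidistribution bound is useful only after time $\asymp\log\htop(x)$. ``Discarding points whose orbit spends too much time high in the cusp'' is incompatible with the almost-sure hypothesis in Azuma's inequality. It would also need an escape-of-mass estimate at level $e^{-c\varepsilon^2N}$, which is itself a large-deviation statement.

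The paper gets uniform bounded differences from compact-return averaging (Proposition~\ref{prop:compact-return-average}). For future times it starts the clock at the first time $s_0$ at which the conditional law can charge $\supp\varphi$. Before $s_0$ both conditional expectations vanish. After $s_0$ the base point $h_*y$ lies in the compact set $\supp\varphi$, so non-divergence and equidistribution give decay $e^{-\delta_2(s-s_0)}$ uniformly in $x$. Past times are handled by contraction. This uses compact support of $\varphi$ itself, so your normalization $m_X(\varphi)=0$ should be imposed only on the sums.

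The height then enters only through $|X_0-Nm_X(\varphi)|\ll_\varphi1+\log\htop(x)$. That term is absorbed by restricting to $\htop(x)\leq e^{c\varepsilon N}$ and using Proposition~\ref{prop:cusp-tail}.
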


\begin{rem}
\leavevmode\par
\begin{enumerate}[label=(\roman*)]
\item Once $G$, $\Gamma$, and $a$ are fixed, the constant $c_\varphi$ may be chosen
in terms of the height of $\supp\varphi$ and a sufficiently high Sobolev norm
of $\varphi$.  It is independent of $N$ and $\varepsilon$.

\item The $\varepsilon^2$ dependence in the exponent reflects the natural Gaussian scaling:
setting $\varepsilon=t/\sqrt N$ in~\eqref{eq:main-ld} gives
\[
 m_X\left\{x:
 \left|\frac1{\sqrt N}\sum_{j=0}^{N-1}
 \bigl(\varphi(a^jx)-m_X(\varphi)\bigr)\right|>t\right\}
 \leq e^{-c_\varphi t^2}
\]
for $1\ll_\varphi t<\sqrt N$, a sub-Gaussian tail bound.
This quadratic rate also yields
the square-root entropy-defect bound in
Theorem~\ref{thm:entropy-main}.
\end{enumerate}
\end{rem}

The same estimate holds in continuous time.

\begin{cor}\label{cor:continuous-ld}
Under the hypotheses of Theorem~\ref{thm:large-deviation}, let
$(a_t)_{t\in\R}$ be a one-parameter subgroup with $a_1=a$.
For every $\varphi\in C_c^\infty(X)$ there is $c_\varphi>0$ such that,
for $0<\varepsilon<1$ and $T\gg_\varphi\varepsilon^{-2}$,
\begin{equation*}
 m_X\left\{x:
 \left|\frac1T\int_0^T\varphi(a_tx)\dd t-m_X(\varphi)\right|
 >\varepsilon\right\}
 \leq e^{-c_\varphi\varepsilon^2T}.
\end{equation*}
\end{cor}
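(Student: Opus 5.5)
The plan is to deduce Corollary~\ref{cor:continuous-ld} from Theorem~\ref{thm:large-deviation} by averaging out the fractional part of the time. Set
\[
 \psi(x)=\int_0^1\varphi(a_sx)\dd s .
\]
Since $s\mapsto a_s$ is smooth and $[0,1]$ is compact, $\psi\in C_c^\infty(X)$. Its support lies in $\bigcup_{s\in[0,1]}a_{-s}\supp\varphi$, which is a compact set of controlled height. Every Sobolev norm of $\psi$ is bounded by a constant, depending only on $(a_t)$, times the corresponding norm of $\varphi$: differentiating along $\mathfrak g$ commutes with $a_s$ up to $\Ad(a_{-s})$, which is uniformly bounded for $s\in[0,1]$. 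By $a_t$-invariance of $m_X$ we also have $m_X(\psi)=m_X(\varphi)$.

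For $T\geq1$ write $N=\lfloor T\rfloor$. Then
\[
 \int_0^T\varphi(a_tx)\dd t=\sum_{j=0}^{N-1}\psi(a^jx)+\int_N^T\varphi(a_tx)\dd t ,
\]
and the last term is at most $\|\varphi\|_\infty$ in absolute value. Dividing by $T$ and using $|N/T-1|\leq 1/T$ together with $|m_X(\varphi)|\leq\|\varphi\|_\infty$, we get
\[
 \left|\frac1T\int_0^T\varphi(a_tx)\dd t-m_X(\varphi)\right|
 \leq \left|\frac1N\sum_{j=0}^{N-1}\psi(a^jx)-m_X(\psi)\right|+\frac{3\|\varphi\|_\infty}{T}.
\]

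Now take $T\gg_\varphi\varepsilon^{-2}$, which in particular gives $T\geq 6\|\varphi\|_\infty\varepsilon^{-1}$ since $\varepsilon<1$. The error term is then at most $\varepsilon/2$, so the exceptional set in the corollary is contained in the set where the discrete Birkhoff average of $\psi$ deviates from $m_X(\psi)$ by more than $\varepsilon/2$. Apply Theorem~\ref{thm:large-deviation} to $\psi$ with $\varepsilon/2$, which requires $N\gg_\psi\varepsilon^{-2}$; this holds because $N\geq T/2$ and the implied constant for $\psi$ is controlled by that of $\varphi$. The measure of the exceptional set is then at most
\[
 e^{-c_\psi\varepsilon^2N/4}\leq e^{-(c_\psi/8)\varepsilon^2T},
\]
so we may take $c_\varphi=c_\psi/8$.

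The only point needing care is the claim that $c_\psi$ and the threshold for $\psi$ depend only on $\varphi$. This follows from the first remark after Theorem~\ref{thm:large-deviation}, which says these constants are governed by the height of the support and a Sobolev norm, together with the bounds on $\psi$ established in the first step. Since $\psi$ is determined by $\varphi$ and the fixed flow, this dependence is automatic in any case.
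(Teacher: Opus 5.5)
Your proof is correct and follows the same route as the paper. Both arguments average $\varphi$ over unit time to get $\psi\in C_c^\infty(X)$ with $m_X(\psi)=m_X(\varphi)$, absorb the $O_\varphi(1)$ remainder into $\varepsilon/2$ once $T\gg_\varphi\varepsilon^{-2}$, and then apply Theorem~\ref{thm:large-deviation} to $\psi$ at level $\varepsilon/2$ using $N=\lfloor T\rfloor\geq T/2$.
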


\subsection{Effective uniqueness of the measure of maximal entropy}

Parry and Bowen proved uniqueness for irreducible shifts of finite type
and certain uniformly hyperbolic systems, respectively
\cite{Parry1964Intrinsic,Bowen1975Equilibrium}. Effective versions were
obtained by Polo for hyperbolic toral automorphisms, Kadyrov for shifts of
finite type and hyperbolic systems, and R\"uhr and Sarig for countable-state
Markov shifts
\cite{Polo2011Equidistribution,Kadyrov2015Effective,RuehrSarig2022}.

In homogeneous dynamics, leafwise entropy gives uniqueness
when the contracting and expanding horospherical subgroups
generate the ambient group
\cite[Theorem~7.9 and Corollary~7.10]{EL10}.
R\"uhr and Khayutin proved effective uniqueness for compact $p$-adic
homogeneous quotients and in an $S$-arithmetic setting, respectively
\cite{Ruehr2016Effectivity,Khayutin2017Large}.

Our Theorem~\ref{thm:large-deviation} yields the following effective uniqueness for real
finite-volume homogeneous spaces, including noncompact quotients, with
a square-root bound in the entropy gap.

\begin{thm}\label{thm:entropy-main}
Under the hypotheses of Theorem~\ref{thm:large-deviation}, for every
$\varphi\in C_c^\infty(X)$ and every $a$-invariant probability measure $\mu$
on $X$,
\begin{equation}\label{eq:entropy-main}
 |m_X(\varphi)-\mu(\varphi)|
 \ll_\varphi
 \bigl(h_{m_X}(a)-h_\mu(a)\bigr)^{1/2}.
\end{equation}
In particular, $m_X$ is the unique $a$-invariant probability measure of
maximal entropy.
\end{thm}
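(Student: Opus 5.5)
We deduce Theorem~\ref{thm:entropy-main} from Theorem~\ref{thm:large-deviation} by a counting argument in the style of Kadyrov and Rühr: an invariant measure whose averages of $\varphi$ differ from $m_X(\varphi)$ by $\delta$ is essentially carried, at time $N$, by the large-deviation set. That set has Haar measure at most $e^{-c\delta^2N}$, so it can only be covered by about $e^{(h_{m_X}(a)-c'\delta^2)N}$ Bowen balls. Hence $h_\mu(a)\le h_{m_X}(a)-c'\delta^2$, which is~\eqref{eq:entropy-main}.

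\textbf{Reductions.} By linearity and replacing $\varphi$ by $-\varphi$, assume $\mu(\varphi)-m_X(\varphi)=\delta>0$ and normalize $\|\varphi\|_\infty\le 1$. It suffices to prove $h_{m_X}(a)-h_\mu(a)\gg_\varphi\delta^2$. Fix a decomposition $\mu=\rho\nu+(1-\rho)\nu'$ with ergodic components handled via the ergodic decomposition; entropy is affine, so we may assume $\mu$ ergodic. The general case then follows by Jensen, because $\delta\le\int\delta_\omega$ and $\delta\mapsto\delta^2$ is convex, so the square-root bound survives averaging.

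Escape of mass must also be controlled. Choose a compact set $K=X_{\le R}$ (points of height at most $R$) with $\mu(K)\ge 1-\delta/10$. The complication is that $R$ would then depend on $\mu$. Instead, we use the standard fact that entropy in the cusp is small: $h_\mu(a)\le h_{m_X}(a)-\kappa\,\mu(X_{>R})+o_R(1)$, as in Einsiedler--Lindenstrauss--Michel--Venkatesh type results. Alternatively, we simply absorb the cusp into the count, since for $N$ large $\varphi$ has compact support and the large-deviation set is a Haar-measure statement anyway.

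\textbf{Key step.} Let $\eta>0$ be smaller than the injectivity radius on $K$ and than a uniform continuity scale of $\varphi$ at $\delta/4$. Set
\[
 E_N=\Bigl\{x:\frac1N\sum_{j<N}\varphi(a^jx)>m_X(\varphi)+\delta/2\Bigr\}.
\]
Birkhoff's theorem and ergodicity give $\mu(E_N)\to1$. Take the $\eta$-neighbourhood of $E_N$ in the Bowen metric, i.e.\ Bowen balls $B_N(x,\eta)=\{y:d(a^jy,a^jx)<\eta,\ 0\le j<N\}$ centered in $E_N\cap K$. By uniform continuity, $\bigcup B_N(x,\eta)\subset E'_N$, where $E'_N$ is the deviation set at level $\delta/4$. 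Theorem~\ref{thm:large-deviation} gives $m_X(E'_N)\le e^{-c\delta^2N/16}$. Each Bowen ball based in $K$ has Haar volume $\gg_\eta e^{-h_{m_X}(a)N}$, since $h_{m_X}(a)$ is the sum of positive Lyapunov exponents (log of the modulus of $\Ad(a)$ on the expanding horosphere). Therefore a maximal $(N,\eta)$-separated subset of $E_N\cap K$ has cardinality at most $e^{(h_{m_X}(a)-c\delta^2/16)N}$ up to constants.

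\textbf{Conclusion and main obstacle.} The Katok entropy formula, applied to sets of $\mu$-measure $\ge1/2$, then gives
\[
h_\mu(a)\le \liminf_N\tfrac1N\log\#\le h_{m_X}(a)-c\delta^2/16.
\]
The main obstacle is noncompactness. Katok's formula uses a fixed $\eta$, but Bowen balls centered at points whose orbits visit the cusp can have smaller volume, and $\mu$ may give mass near the cusp. I would handle this by restricting to points of $E_N$ whose orbit spends at least a $(1-\delta/10)$-fraction of time in $K$ (which has large $\mu$-measure by Birkhoff). I would then use the refined covering count of Einsiedler--Lindenstrauss--Michel--Venkatesh for such trajectories, which loses only $e^{O(\delta N\cdot\text{small})}$. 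Choosing $K$ depending only on $\varphi$ (via the height of $\supp\varphi$) keeps the implied constant depending only on $\varphi$. Uniqueness of the measure of maximal entropy is then immediate, since $C_c^\infty$ functions determine $\mu$.
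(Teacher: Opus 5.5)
Your main line is correct, and it takes a different route from the paper's. The paper uses two-sided group Bowen balls $B_{N,\rho}^Gy$. Its entropy input is Khayutin's non-compact local-entropy bound (Proposition~\ref{prop:local-entropy}). It packs a maximal disjoint family of such balls centred at $\mu$-typical points of a compact set $Y_\eta$, and compares their total Haar mass, computed exactly in Proposition~\ref{prop:bowen-volume}, with a symmetrized form of Theorem~\ref{thm:large-deviation}.

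You instead use forward Bowen balls, a maximal $(N,\eta)$-separated subset of $E_N\cap K$, and Katok's covering-number formula. This avoids both the symmetrization and the two-sided volume computation. The Haar lower bound you need is immediate: if $\inj(x)>\eta$, the metric ball $B_N(x,\eta)$ contains $\bigl(\bigcap_{j<N}a^{-j}B_\eta^Ga^j\bigr)x$, whose Haar measure is $\gg_\eta e^{-Nh_{m_X}(a)}$. The exponent $h_{m_X}(a)-c\delta^2/16$ in your count does not depend on $\eta$ or $K$. So letting $\eta\to0$ in Katok's formula costs nothing, and it is harmless that $K$ (hence $\eta$) depends on $\mu$: they only affect subexponential prefactors, exactly as $Y_\eta,\rho_\eta$ do in the paper.

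What is off is your ``main obstacle'' paragraph. Bowen balls centred in $K$ do not lose Haar volume when the orbit later enters the cusp, because the lower bound above depends only on the injectivity radius at the centre. So the following are all unnecessary:
\begin{itemize}
\item the restriction to orbits spending most of their time in $K$;
\item the inequality $h_\mu(a)\le h_{m_X}(a)-\kappa\,\mu(X_{>R})+o_R(1)$, which is not available for general $G/\Gamma$ and $a$;
\item the Einsiedler--Lindenstrauss--Michel--Venkatesh covering count.
\end{itemize}
They would also be harmful. A loss of size $e^{O(\delta N\cdot\text{small})}$ with a fixed small factor swamps the gain $e^{-c\delta^2N}$ once $\delta$ is small, which destroys the square-root rate.

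The step that genuinely needs justification on non-compact $X$ is Katok's inequality itself:
\[
 h_\mu(a)\le\lim_{\eta\to0}\liminf_N\tfrac1N\log N_\mu(N,\eta,\tfrac12),
\]
where $N_\mu(N,\eta,\frac12)$ is the minimal number of $(N,\eta)$-Bowen balls covering a set of $\mu$-measure at least $\frac12$. Katok's proof of this direction uses no compactness. It combines Shannon--McMillan--Breiman for a finite partition whose $\eta$-boundary has small $\mu$-measure with Birkhoff's theorem for visits to that boundary. You should either say this or substitute Khayutin's bound, as the paper does.

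Finally, your Jensen step also needs $h_{\mu_\omega}(a)\le h_{m_X}(a)$ for ergodic components with $\delta_\omega=0$. Your own count with $E_N$ replaced by $X$ gives this, or you can cite \cite{EL10} as the paper does.
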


\subsection{Pointwise equidistribution along horospheres}\label{subsec:pointwise}

Birkhoff's theorem gives equidistribution for $m_X$-almost every initial
point. In homogeneous dynamics, one is also interested in trajectories
starting on each fixed horosphere. Let $H$ be the full expanding horospherical subgroup of
$a$, with Haar measure $m_H$, as in
Subsection~\ref{subsec:horospherical}.

For weighted diagonal flows on
$\operatorname{SL}_d(\mathbb R)/\operatorname{SL}_d(\mathbb Z)$,
Kleinbock--Shi--Weiss \cite[Theorem~1.1]{KSW17} prove pointwise
equidistribution for compactly supported smooth test functions at almost
every translate of each fixed point by their expanding unipotent
subgroup, with error
\[
 o(T^{-1/2}(\log T)^{3/2+\eta}),\qquad\eta>0.
\]
Shi \cite[Corollary~1.7]{Shi21} obtains the same rate for suitable
expanding unipotent subgroups in semisimple homogeneous spaces under
spectral-gap and expanding-cone hypotheses. For a related qualitative
result along nondegenerate analytic curves in abelian horospheres in
$\operatorname{SL}_d(\mathbb R)/\operatorname{SL}_d(\mathbb Z)$, see
Solan--Wieser \cite{SolanWieser2025}.

A further application is the following
quantitative pointwise equidistribution on every full expanding horosphere.

\begin{thm}\label{thm:pointwise-equidistribution}
Under the hypotheses of Theorem~\ref{thm:large-deviation}, let
$(a_t)_{t\in\R}$ be a one-parameter subgroup with $a_1=a$.

For every $\varphi\in C_c^\infty(X)$, the following hold.
\begin{enumerate}[label=(\roman*)]
\item For every fixed $x\in X$, for $m_H$-almost every $h\in H$,
\begin{equation*}
 \frac1T\int_0^T\varphi(a_thx)\dd t
 =m_X(\varphi)+O_\varphi\!\left(\sqrt{\frac{\log T}{T}}\right).
\end{equation*}
\item For $m_X$-almost every $x\in X$,
\begin{equation*}
 \frac1T\int_0^T\varphi(a_tx)\dd t
 =m_X(\varphi)+O_\varphi\!\left(\sqrt{\frac{\log T}{T}}\right).
\end{equation*}
\end{enumerate}

\end{thm}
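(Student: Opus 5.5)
We will deduce both parts from Corollary~\ref{cor:continuous-ld} by a Borel--Cantelli argument along a sparse sequence of times, combined with an interpolation step. Part (ii) is the model case. Take $\varepsilon_T=C\sqrt{\log T/T}$ with $C$ large relative to $c_\varphi^{-1/2}$. Then $\varepsilon_T^2T=C^2\log T\gg1$, so the hypothesis $T\gg_\varphi\varepsilon_T^{-2}$ holds for large $T$, and the corollary bounds the exceptional set by $T^{-c_\varphi C^2}$. Next restrict to the lattice of times $T_k=k$ (or $T_k=k^{1/2}$ if needed). Once $c_\varphi C^2>1$ this bound is summable, and Borel--Cantelli shows that for $m_X$-a.e.\ $x$, all large $k$ satisfy $|\frac1{T_k}\int_0^{T_k}\varphi(a_tx)\dd t-m_X(\varphi)|\le\varepsilon_{T_k}$. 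To pass from $T_k$ to $T\in[T_k,T_{k+1}]$, note that the integrals differ by at most $\|\varphi\|_\infty(T_{k+1}-T_k)$ and the normalizations by $O(\|\varphi\|_\infty (T_{k+1}-T_k)/T)$. With unit spacing this is $O(1/T)$, which is negligible against $\sqrt{\log T/T}$.

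For part (i) the problem is to transfer the $m_X$-estimate to Haar measure on a single expanding horosphere $Hx$. We use the standard thickening argument. Write a neighborhood of $x$ as $P^-_{r}H_{r}x$, where $P^-$ is the weak stable subgroup (centralizer times contracting horospherical subgroup) and $r$ is below the injectivity radius at $x$. The measure $m_X$ restricted there is comparable to the product $m_{P^-}\times m_H$. For $p\in P^-_r$, the point $a_thpx$ equals $(a_tpa_{-t})a_thx'$ up to conjugating $p$ past $h$. More precisely, write $hp=p'h'$ with $p'\in P^-$ close to identity and $h'$ a slight reparametrization of $h$. Since $a_tp'a_{-t}$ stays within $O(r)$ of the identity for all $t\ge0$, we get $|\varphi(a_thx)-\varphi(a_th'px)|\ll_\varphi r$ uniformly in $t$. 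The centralizer part is bounded rather than contracting, so it contributes $O(r)$ and no decay. Choosing $r=r_T=\sqrt{\log T/T}$, the Birkhoff averages along $h$ and along points in the $r$-thickening differ by $O(r_T)$. Hence the $m_H$-measure of bad $h$ in a fixed compact $B\subset H$ is at most $m_X(\text{bad set at level }\varepsilon_T/2)/m_{P^-}(P^-_{r_T})$. The denominator is only polynomial in $r_T$, roughly $T^{-\dim P^-/2}$ up to logarithms, so taking $C$ large so that $c_\varphi C^2$ exceeds $1+\dim P^-/2$ keeps the bound summable over $T_k=k$.

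Then Borel--Cantelli on $B$, exhaustion of $H$ by countably many compacts, and the same interpolation as in (ii) give (i). The thickening must be done at the fixed point $x$, which need not be generic. Injectivity radius at $x$ is a fixed positive number, so noncompactness is harmless here.

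The main obstacle I expect is making the change of variables $hp=p'h'$ precise. We need the Jacobian of $h\mapsto h'$ to be uniformly bounded, so that $m_H$-measure of bad $h$ really is controlled by $m_X$-measure of the thickened bad set. We also need to verify that the Lipschitz constant of $\varphi$ together with boundedness of $\Ad(a_t)$ on the centralizer gives a $t$-uniform error $O(r)$ rather than one growing with $T$. I would first do this with $P^-$ replaced by the full weak-stable group and a local product chart $P^-\times H\to G$ near the identity, whose differential is invertible.
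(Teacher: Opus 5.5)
Your part (ii) is the paper's argument. For part (i) you take a genuinely different route, and it works.

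\emph{What the paper does.} The paper never passes through $m_X$ for (i). It reruns the martingale of Section~\ref{sec:martingale} at the fixed base point $x$, for $\Phi=\int_0^1\varphi\circ a_s\dd s$. Azuma then bounds the $\mu_0^{(N)}$-measure of bad $h$ by $2e^{-\varepsilon^2N/(8B_\Phi^2)}$ as soon as $N\varepsilon\ge2A_\Phi(1+\log\htop(x))$. Next, it uses $\|F_{N+1}-F_N\|_\infty\ll e^{-\chi N}$ for $F_N=\dd\mu_0^{(N)}/\dd m_H$ to get $F_N\ge\frac12$ on a fixed open set $V\subset H$, so that $m_H|_V\le2\mu_0^{(N)}$. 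It concludes by Borel--Cantelli and a countable cover of $H$ by translates $Vh_0$.

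\emph{What you do instead.} You treat Corollary~\ref{cor:continuous-ld} as a black box and transfer it to $Hx$ by thickening in the weak-stable direction. The loss $m_{P^-}(P^-_{r_T})^{-1}\asymp(T/\log T)^{\dim P^-/2}$ is absorbed, because at level $\varepsilon_T/2$ the bound is $T^{-c_\varphi C^2/4}$ and $C$ is free.

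\emph{What each buys.} The paper's route exploits the stronger fixed-base-point estimate it has already proved, and needs no local product structure. Yours is softer and more portable: it needs only the $m_X$ estimate and the fact that $\Ad(a_t)$ does not expand the directions complementary to $\mathfrak h$.

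\emph{The obstacle you flag.} The change of variables $hp=p'h'$ is unnecessary if you put $P^-$ on the left. For $p\in P^-_r$ and $h\in B$ one has exactly $a_tphx=(a_tpa_{-t})\,a_thx$. Hence $d_X(a_tphx,a_thx)\le d_G(a_tpa_{-t},e)\ll r$ for all $t\ge0$, with no reparametrization of $h$. The comparison $m_{P^-}(P^-_r)\,m_H(E)\ll_{B,x}m_X(P^-_rEx)$ for $E\subset B$ then follows from two facts:
\begin{enumerate}[label=(\alph*)]
\item $m_G$ has a smooth positive density with respect to $m_{P^-}\times m_H$ in the coordinates $(p,h)\mapsto ph$;
\item $g\mapsto gx$ has bounded multiplicity on any fixed compact subset of $G$.
\end{enumerate}
So $B$ need not be small relative to $\inj(x)$.

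\emph{A correction.} Define $P^-$ through $\mathfrak p^-=\bigoplus_{|\beta|\le1}E_\beta$, not as ``centralizer times contracting group''. $\Ad(a)$ may have eigenvalue $-1$. Its eigenspace lies outside the centralizer of $a$, but it is needed for $\mathfrak g=\mathfrak p^-\oplus\mathfrak h$, i.e.\ for the thickening to be open. Boundedness of $\Ad(a_t)$ on $\mathfrak p^-$ for $t\ge0$ still holds, because each $\Ad(a_s)$ commutes with $\Ad(a)$ and so preserves every $E_\beta$.
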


In the common setting of full horospheres, our theorem improves (compared to previous results) the
logarithmic factor to $(\log T)^{1/2}$.

\subsection{Outline of the proof}

We outline the proof of Theorem~\ref{thm:large-deviation} in four steps.
Fix $\varphi\in C_c^\infty(X)$ and write
\[
 S_N(x)=\sum_{s=0}^{N-1}\varphi(a^sx),\qquad x\in X.
\]

\smallskip
\noindent\emph{Step 1: horospherical randomization.}
Let $H$ be the expanding horospherical subgroup associated with $a$ and
choose a smooth probability measure $\nu$ on $H$ supported in a small
neighborhood of the identity.
Consider the product probability space
\[
 (\Omega,\Prob)=(H^N,\nu^{\otimes N}).
\]
Write $\mathbf h=(h_0,\ldots,h_{N-1})$ for the independent coordinates and set
\[
 P_0=e,\qquad
 P_k=(a^{-(k-1)}h_{k-1}a^{k-1})\cdots(a^{-1}h_1a)h_0,
 \quad 1\leq k\leq N.
\]
These perturbations enter at contracting scales.  For base points $x$ of
sufficiently small height, we seek a large-deviation estimate on this
probability space:
\begin{equation}\label{eq:outline-randomized-deviation}
 \Prob\!\left\{|S_N(P_Nx)-Nm_X(\varphi)|>\varepsilon N\right\}
 \ll_\varphi e^{-c_\varphi\varepsilon^2N}.
\end{equation}
The large-height region is handled by the cusp-tail estimate.  Haar
invariance then gives the desired bound for $S_N(x)$.

\smallskip
\noindent\emph{Step 2: reduction to martingale estimates.}
For fixed $x$, reveal the coordinates one at a time:
\[
 \cF_k=\sigma(h_0,\ldots,h_{k-1}),\qquad
 X_k=\E\bigl(S_N(P_Nx)\mid\cF_k\bigr),
 \quad 0\leq k\leq N,
\]
where $\cF_0$ is trivial.  Then $(X_k)$ is a martingale and
$X_N=S_N(P_Nx)$.  On the region in Step 1, it suffices to show
\begin{align}
  X_0&\approx Nm_X(\varphi),
  \label{eq:outline-initial-expectation}\\
  |X_{k+1}-X_k|&\ll_\varphi1,
  \qquad 0\leq k<N.
  \label{eq:outline-bounded-differences}
\end{align}
Azuma's inequality (Proposition~\ref{prop:azuma}) then gives
\eqref{eq:outline-randomized-deviation}.

\smallskip
\noindent\emph{Step 3: convolution replaces the product-space integral.}
Let $\nu_j$ be the law of $a^{-j}h_ja^j$.  For $k<N$, the remaining random
product has law
\[
 \mu_k^{(N)}=\nu_{N-1}*\cdots*\nu_k.
\]
For $k<N$, the integral over the $N-k$ hidden coordinates becomes an
integral over a single variable $h\in H$:
\[
 \begin{aligned}
 X_k
 &=\int_{H^{N-k}}S_N(P_N(\mathbf h)x)
       \dd\nu(h_{N-1})\cdots\dd\nu(h_k)\\
 &=\int_H S_N(hP_kx)\dd\mu_k^{(N)}(h)
 =\sum_{s=0}^{N-1}\int_H\varphi(a^shP_kx)\dd\mu_k^{(N)}(h).
 \end{aligned}
\]
After rescaling, a fixed smooth factor $\nu$ gives uniformly controlled
densities (Lemma~\ref{lem:uniform-densities}).  The key formula
\eqref{eq:future-conditional-averages} reduces the estimates for the
summands above to integrals of the form
\[
 \int_H f(h)\varphi(a^thy)\dd m_H(h),
\]
with $t\in\Z_{\geq0}$, $y\in X$, and a smooth probability density $f$.
This is the bridge to dynamics.

\smallskip
\noindent\emph{Step 4: dynamical inputs.}
For base points $y$ in a fixed compact set $K$, effective equidistribution
gives, for smooth probability densities $f$ supported in $B_1^H$,
\[
 \left|\int_H f(h)\varphi(a^thy)\dd m_H(h)-m_X(\varphi)\right|
 \ll_{\varphi,K}\|f\|_{C^\ell}e^{-\delta_1t}.
\]
For the base points in the region of Step 1, summing the effective
equidistribution estimates gives the schematic relation
\[
 X_0=\sum_{s=0}^{N-1}\int_H\varphi(a^shx)\dd\mu_0^{(N)}(h)
 \approx Nm_X(\varphi),
\]
which yields
\eqref{eq:outline-initial-expectation}.
To prove \eqref{eq:outline-bounded-differences}, fix $0\leq k<N$ and
estimate the change $X_{k+1}-X_k$ caused by revealing $h_k$.
For \emph{past times} $s\leq k$, contraction makes the effect of revealing
$h_k$ decay exponentially in $k-s$.
For \emph{future times} $s>k$, effective equidistribution and quantitative
non-divergence give
compact-return averaging (Proposition~\ref{prop:compact-return-average}),
with exponential decay from the first possible visit to $\supp\varphi$.
For some decay rates $\chi,\delta_2>0$, summing these bounds gives
\[
 |X_{k+1}-X_k|
 \ll_\varphi
 \underbrace{\sum_{r\geq0}e^{-\chi r}}_{\text{past times}}
 +\underbrace{\sum_{r\geq0}e^{-\delta_2r}}_{\text{future times}}
 \ll_\varphi1.
\]

\medskip
\noindent\textbf{Structure of the paper.}\ Section~\ref{sec:inputs} collects the
quantitative inputs.  It fixes the expanding horospherical subgroup, height,
and norms and establishes effective equidistribution, quantitative
non-divergence, and the cusp-tail estimate.  Section~\ref{sec:convolution} constructs the
contracting convolutions and proves the uniform density and partition
estimates.  Section~\ref{sec:compact-return} combines equidistribution and
non-divergence into the compact-return averaging proposition.
Section~\ref{sec:martingale} constructs the martingale and proves the
uniform bounded-difference estimate.  Section~\ref{sec:ld-proof} applies
Azuma's inequality and the cusp-tail estimate to prove
Theorem~\ref{thm:large-deviation} and Corollary~\ref{cor:continuous-ld},
and then proves the pointwise
equidistribution in Theorem~\ref{thm:pointwise-equidistribution}. Finally,
Section~\ref{sec:entropy} establishes the local-entropy and Bowen-volume
estimates and proves Theorem~\ref{thm:entropy-main}.

\medskip
\noindent\textbf{Conventions.}
For nonnegative quantities $A$ and $B$, we write $A\ll B$ if $A\leq CB$ for
some constant $C>0$, write $A\gg B$ if $B\ll A$, and write $A\asymp B$ if
both $A\ll B$ and $B\ll A$.  The constant $C$ is called the implied
constant.  Unless otherwise indicated, the implied constants may depend on
$G$, $\Gamma$, $a$, and the auxiliary choices fixed throughout the paper;
additional dependence is recorded by subscripts.

\medskip
\noindent\textbf{AI disclosure.}
OpenAI Codex was used only as a supplementary tool for checking parts of the mathematical reasoning and calculations and for improving the presentation of the manuscript. The research questions, mathematical ideas, proofs, and results were developed by the authors. The authors reviewed and verified all AI-assisted material and retain full responsibility for the final manuscript.

\medskip
\noindent\textbf{Acknowledgments.}\ The authors are grateful to Elon Lindenstrauss
for suggesting this project, for his encouragement, and for sharing many
ideas and insights that shaped its development.
They also thank Dmitry Kleinbock for pointing out the application to
pointwise equidistribution in Subsection~\ref{subsec:pointwise}.

\section{Quantitative estimates along horospheres}\label{sec:inputs}

This section develops the quantitative horospherical inputs used in the
proof.  We first set up the expanding horospherical subgroup and its
contraction, and then record effective equidistribution, quantitative
non-divergence, and a cusp-tail estimate.

\subsection{The horospherical subgroup, height, and norms}
\label{subsec:horospherical}

We fix the expanding horospherical subgroup associated with $a$, record the
metric and derivative bounds for conjugation, and introduce the height and
differentiable norms used below.

Write $\mathfrak g=\operatorname{Lie}(G)$.  Following
\cite[\S~1.3]{KM96}, set
\[
 \mathfrak h
 =\{Y\in\mathfrak g:\Ad(a^{-n})Y\longrightarrow0
     \text{ as }n\to\infty\},
\]
and let $H$ be the connected Lie subgroup with Lie algebra $\mathfrak h$.
Equivalently,
\[
 H=\{h\in G:a^{-n}ha^n\longrightarrow e\text{ as }n\to\infty\}.
\]
For every eigenvalue $\beta$ of $\Ad(a)$, let
\[
 E_\beta=\ker\bigl(\Ad(a)-\beta\operatorname{id}_{\mathfrak g}\bigr).
\]
Since $\Ad(a)$ is diagonalizable over $\R$,
\[
 \mathfrak g=\bigoplus_\beta E_\beta,
 \qquad
 \mathfrak h=\bigoplus_{|\beta|>1}E_\beta.
\]
In particular, $H$ is connected, simply connected, nilpotent, and hence
unimodular.  Choose an inner product on $\mathfrak g$ for which the
eigenspaces $E_\beta$ are mutually orthogonal.  Let $d_G$ be the
corresponding right-invariant Riemannian distance on $G$, let $d_X$ be the
induced quotient metric, and let $d_H$ be the intrinsic Riemannian distance
on $H$ obtained by restricting the inner product to $\mathfrak h$.
Write $\|\cdot\|$ for these inner-product norms and the corresponding
operator norms.
Denote by $B_r^G$ and $B_r^H$ the balls of radius $r$ around the identity in $G$ and $H$, respectively.
Let $m_H$ be the normalized Haar measure on $H$ so that $m_H(B_1^H)=1$.

For $n\in\mathbb Z$ and $h\in H$, put
\[
 \lambda_n(h)=a^nha^{-n},\qquad
 J_n=\left|\det\left(\Ad(a)^n|_{\mathfrak h}\right)\right|.
\]
Thus $m_H(\lambda_n(E))=J_nm_H(E)$ for measurable $E\subset H$,
$J_n=J_1^n$, and $J_n\geq1$ for $n\geq0$.
For $x\in X$, let $\pi_x:G\to X$ be the map given by $g\mapsto gx$ and define
\[
 \inj(x)=\sup\{r>0:\pi_x\text{ is injective on }B_r^G\}.
\]
We use the height
\begin{equation*}
 \htop(x)=\max\{1,\inj(x)^{-1}\}.
\end{equation*}
In particular, for every compact $C\subset G$, all $g\in C$, and $x\in X$,
\begin{equation}\label{eq:height-compact-change}
 \htop(gx)\asymp_C\htop(x).
\end{equation}

Fix bases $Y_1,\ldots,Y_d$ of $\mathfrak h$ and
$Z_1,\ldots,Z_D$ of $\mathfrak g$.
For $f\in C^\infty(H)$, $\varphi\in C^\infty(X)$, $h\in H$, and $x\in X$, define
\[
 Y_i f(h)=\left.\frac{\dd}{\dd t}\right|_{t=0} f(\exp(tY_i)h), \qquad Z_i\varphi(x)=\left.\frac{\dd}{\dd t}\right|_{t=0} \varphi(\exp(tZ_i)x).
\]
For $I=(i_1,\ldots,i_j)\in\{1,\ldots,d\}^j$ and $J=(j_1,\ldots,j_r)\in\{1,\ldots,D\}^r$, set
\[
 Y_I=Y_{i_1}\cdots Y_{i_j}, \qquad Z_J=Z_{j_1}\cdots Z_{j_r}.
\]
For $j=r=0$, both operators are the identity. Write $\|\cdot\|_2$ for the $L^2(X)$-norm, and we use the following notation: for $f\in C^\infty(H)$ and $\varphi \in C^\infty(X)$
\begin{align*}
 \|f\|_{C^\ell} &=\max_{\substack{0\leq j\leq\ell\\ I\in\{1,\ldots,d\}^j}} \|Y_If\|_\infty, \quad  \|\varphi\|_{C^\ell} =\max_{\substack{0\leq r\leq\ell\\ J\in\{1,\ldots,D\}^r}} \|Z_J\varphi\|_\infty, \\
 \|\varphi\|_{\ell,2} &=\sum_{\substack{0\leq r\leq \ell \\ J\in\{1,\ldots,D\}^r}} \|Z_J\varphi\|_2, \quad  \|\varphi\|_{\Lip} =\sup_{x\neq y}\frac{|\varphi(x)-\varphi(y)|}{d_X(x,y)}.
\end{align*}

The following lemma collects the metric and derivative estimates used below.
\begin{lem}\label{lem:conjugation-adjoint}
For $g\in G$ normalizing $H$, write $C_g(h)=ghg^{-1}$.
The following properties hold.
\begin{enumerate}[label=(\roman*)]
\item For $h,h'\in H$,
\[
 d_H(C_g(h),C_g(h'))
 \leq\|\Ad(g)|_{\mathfrak h}\|\,d_H(h,h').
\]

\item There exists $\chi>0$ such that, for every
$n\in\Z_{\geq0}$ and $Y\in\mathfrak h$,
\[
 \|\Ad(a^{-n})Y\|\leq e^{-\chi n}\|Y\|.
\]
Hence, for $h,h'\in H$,
\[
 d_H(\lambda_{-n}(h),\lambda_{-n}(h'))
 \leq e^{-\chi n}d_H(h,h'),
\]
and, for every $r>0$,
\begin{equation}\label{eq:contraction}
 \lambda_{-n}(B_r^H)\subset B_{re^{-\chi n}}^H.
\end{equation}

\item For $f\in C^\infty(H)$, $I=(i_1,\ldots,i_j)$, and $u,v,h\in H$,
\begin{align*}
 Y_I(f\circ C_g)(h)
 &=\bigl[(\Ad(g)Y_{i_1})\cdots(\Ad(g)Y_{i_j})f\bigr](C_g(h)),\\
 Y_I[f(u^{-1}\,\cdot\,)](h)
 &=\bigl[(\Ad(u^{-1})Y_{i_1})\cdots
         (\Ad(u^{-1})Y_{i_j})f\bigr](u^{-1}h),
 \\
 Y_I[f(\,\cdot\,v^{-1})](h)
 &=(Y_If)(hv^{-1}).
\end{align*}

\end{enumerate}
\end{lem}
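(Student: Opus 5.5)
All three parts are elementary consequences of the Lie-theoretic setup: the right-invariant metric, the eigenspace decomposition of $\Ad(a)$ on $\mathfrak h$, and the chain rule for differentiating along one-parameter subgroups. I take them in order.

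For (i), I use that $d_H$ is the intrinsic Riemannian distance on $H$ coming from the restricted inner product on $\mathfrak h$, transported by right translation. For $g$ normalizing $H$, the map $C_g\colon H\to H$ is a group automorphism. Its differential at any point $h$, in right-trivialized coordinates, is $\Ad(g)|_{\mathfrak h}$. Indeed,
\[
C_g(\exp(tY)h)=\exp(t\Ad(g)Y)\,C_g(h),
\]
so a tangent vector $Yh$ at $h$ is sent to $(\Ad(g)Y)C_g(h)$. Hence the length of every piecewise smooth path in $H$ grows under $C_g$ by at most a factor $\|\Ad(g)|_{\mathfrak h}\|$. Taking the infimum over paths from $h$ to $h'$ gives the inequality.

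For (ii), $\Ad(a^{-n})$ acts on $E_\beta\subset\mathfrak h$ as the scalar $\beta^{-n}$ with $|\beta|>1$. The eigenspaces are mutually orthogonal, so the operator norm of $\Ad(a^{-n})|_{\mathfrak h}$ is $\max_{|\beta|>1}|\beta|^{-n}$. I set $\chi=\min_{|\beta|>1}\log|\beta|>0$, which is well defined because there are finitely many eigenvalues. The metric statement then follows from (i) with $g=a^{-n}$, which normalizes $H$ because $\Ad(a)$ preserves $\mathfrak h$. The ball inclusion \eqref{eq:contraction} is the special case $h'=e$, using $\lambda_{-n}(e)=e$.

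For (iii), I argue by induction on $j$, starting from the one-derivative identities. For conjugation, I compute
\[
(f\circ C_g)(\exp(tY)h)=f\bigl(\exp(t\Ad(g)Y)C_g(h)\bigr),
\]
whose derivative at $t=0$ is $[(\Ad(g)Y)f](C_g(h))$. In other words, $Y(f\circ C_g)=((\Ad(g)Y)f)\circ C_g$ as functions. Applying this repeatedly, with $f$ replaced at each step by the already-differentiated function, gives the ordered product formula. Note that the innermost operator $Y_{i_j}$ acts first, and the ordering is preserved.

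For left translation by $u^{-1}$, I write $u^{-1}\exp(tY)h=\exp(t\Ad(u^{-1})Y)\,u^{-1}h$ and iterate in the same way. Strictly speaking $\Ad(u^{-1})Y$ need not lie in the span of the basis $Y_i$ only as a fixed combination, but it does lie in $\mathfrak h$, so it defines a right-invariant-type derivative exactly as in the definition. Right translation by $v^{-1}$ commutes with left multiplication by $\exp(tY)$, so $Y_I$ commutes with it directly.

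I expect no serious obstacle. The only care needed is to get the order of composition right in the iterated formulas and to verify the right-invariance convention in (i), namely that derivatives are taken with $\exp(tY)$ on the left, matching the right-invariant metric.
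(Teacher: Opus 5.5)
Your proposal is correct and follows essentially the same route as the paper: path-length comparison via $C_g(\exp(tY)h)=\exp(t\Ad(g)Y)C_g(h)$ and the right-invariant speed for (i); the orthogonal eigenspace computation with $\chi=\min_{|\beta|>1}\log|\beta|$, followed by (i) with $g=a^{-n}$, for (ii); and differentiating the one-step conjugation and translation identities and iterating for (iii). Your explicit check that the iteration preserves the order of the factors $\Ad(g)Y_{i}$ is a detail the paper leaves implicit.
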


\begin{proof}
For (i), let $\gamma:[0,1]\to H$ be a piecewise $C^1$ curve from $h$
to $h'$.  At each differentiability point, choose $Y_t\in\mathfrak h$ such
that
\[
 \dot\gamma(t)=\left.\frac{\dd}{\dd s}\right|_{s=0}
 \exp(sY_t)\gamma(t).
\]
The identity
$C_g(\exp(sY_t)\gamma(t))
=\exp(s\Ad(g)Y_t)C_g(\gamma(t))$
shows that the right-invariant metric gives speeds $\|Y_t\|$ and
$\|\Ad(g)Y_t\|$ for $\gamma$ and $C_g\circ\gamma$, respectively.  Hence
\[
 \operatorname{length}(C_g\circ\gamma)
 =\int_0^1\|\Ad(g)Y_t\|\dd t
 \leq\|\Ad(g)|_{\mathfrak h}\|\operatorname{length}(\gamma).
\]
Taking the infimum over $\gamma$ proves (i).

For (ii), choose $\chi>0$ such that $|\beta|\geq e^\chi$ for every
eigenvalue $\beta$ of $\Ad(a)$ with $|\beta|>1$, and write
$Y=\sum_{|\beta|>1}Y_\beta$.  Orthogonality of the eigenspaces gives
\[
 \|\Ad(a^{-n})Y\|^2
 =\sum_{|\beta|>1}|\beta|^{-2n}\|Y_\beta\|^2
 \leq e^{-2\chi n}\sum_{|\beta|>1}\|Y_\beta\|^2
 =e^{-2\chi n}\|Y\|^2.
\]
Apply (i) with $g=a^{-n}$ to obtain the distance contraction.
The ball inclusion follows due to $\lambda_{-n}(e)=e$.

For (iii), the exponential conjugation identity gives
\begin{align*}
 Y_i(f\circ C_g)(h)
 &=\left.\frac{\dd}{\dd t}\right|_{t=0}
   f\bigl(\exp(t\Ad(g)Y_i)C_g(h)\bigr)\\
 &=\bigl((\Ad(g)Y_i)f\bigr)(C_g(h)).
\end{align*}
For left translation, use
$u^{-1}\exp(tY_i)h=\exp(t\Ad(u^{-1})Y_i)u^{-1}h$;
for right translation, use
$\exp(tY_i)hv^{-1}=\exp(tY_i)(hv^{-1})$.
Differentiating these identities and iterating gives the ordered formulas
for $Y_I$.

\end{proof}

\subsection{Effective equidistribution, quantitative non-divergence, and cusp tails}

We collect the three quantitative inputs used in the proof: effective
equidistribution of expanding horospherical translates, quantitative
non-divergence for translates based at a compact set, and a polynomial tail
bound for the height under Haar measure.

\begin{prop}\label{prop:effective-equidistribution}
Suppose that the action of $a$ on $X$ is exponentially mixing, and let $\ell\in\Z_{\geq 1}$ be as in Definition \ref{exponential-mixing}.
There are $\delta_1>0$ and $\omega>0$ such that for every $n\geq0$, $x\in X$, $\varphi\in C_c^\infty(X)$, and $f\in C_c^\infty(H)$ supported in $B_1^H$,
\begin{align*}
 &\left|\int_H f(h)\varphi(a^n h x)\dd m_H(h)
 -m_H(f)m_X(\varphi)\right|\\
 &\qquad\ll
 \max\!\left\{\|\varphi\|_{C^1},\|\varphi\|_{\ell,2}\right\}
 \|f\|_{C^\ell}\htop(x)^\omega e^{-\delta_1 n}.
\end{align*}
\end{prop}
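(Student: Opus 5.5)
We use the standard Margulis thickening argument: thicken the horospherical piece $\{hx: h\in\supp f\}$ in the weak-stable directions, apply exponential mixing to the thickened function, and pay for the thickening with the Lipschitz (i.e.\ $C^1$) norm of $\varphi$. The loss in injectivity radius near $x$ produces the factor $\htop(x)^\omega$.

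\emph{Setup.} Write $\mathfrak g=\mathfrak h\oplus\mathfrak h^{0-}$, where $\mathfrak h^{0-}=\bigoplus_{|\beta|\leq1}E_\beta$ is the Lie algebra of the weak-stable group $H^{0-}$. The product map $H^{0-}\times H\to G$ is a diffeomorphism near the identity. Fix a small parameter $r\in(0,1)$ with
\[
 r\leq c\,\htop(x)^{-1}.
\]
Then $g\mapsto gx$ is injective on $B^{H^{0-}}_r B^H_{2}$ after a harmless rescaling. To get this, first reduce to $f$ supported in a ball of radius comparable to $\inj(x)$ via a smooth partition of unity on $B_1^H$. This uses $\ll\htop(x)^{\dim H}$ pieces, each with $C^\ell$ norm $\ll\|f\|_{C^\ell}\htop(x)^{\ell}$, and applies the argument at each translate $h_ix$, whose heights are $\asymp\htop(x)$ by \eqref{eq:height-compact-change}.

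\emph{The thickened test function.} Choose a bump $\rho_r\ge 0$ on $H^{0-}$ supported in $B_r$ with integral $1$ and $\|\rho_r\|_{C^\ell}\ll r^{-\ell-\dim}$. Define $\psi(ux\cdot)$ on $X$ by
\[
 \psi(gx)=\rho_r(p)\,f(h)\,\Theta(p,h),\qquad g=ph,
\]
where $\Theta$ is the smooth Haar-density correction. Then $\int\psi\,dm_X=m_H(f)$ up to normalization, and
\[
 \|\psi\|_{\ell,2}\ll r^{-O(\ell)}\|f\|_{C^\ell}.
\]

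\emph{Mixing and comparison.} Exponential mixing gives
\[
 \Bigl|\int\varphi(a^n y)\psi(y)\dd m_X(y)-m_X(\varphi)m_H(f)\Bigr|\ll e^{-\sigma n}\|\varphi\|_{\ell,2}\,r^{-O(\ell)}\|f\|_{C^\ell}.
\]
Unfolding, the left integral equals $\int\rho_r(p)\int f(h)\varphi(a^nphx)\dd h\dd p$ up to the density correction. Now
\[
 a^nph=(a^npa^{-n})\,a^nh,
\]
and $a^npa^{-n}$ stays in $B_{O(r)}$ for $n\geq0$, since $\Ad(a)$ is non-expanding on $\mathfrak h^{0-}$. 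Hence
\[
 |\varphi(a^nphx)-\varphi(a^nhx)|\ll\|\varphi\|_{C^1}\,r .
\]
The density correction $\Theta$ contributes an $O(r)$ relative error as well. Together these give an error $\ll\|\varphi\|_{C^1}\|f\|_{\infty}\,r$.

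\emph{Optimizing $r$.} Choose $r=\min\{c\,\htop(x)^{-1},e^{-\kappa n}\}$ with $\kappa$ small relative to $\sigma/O(\ell)$. Both errors are then bounded by
\[
 \max\{\|\varphi\|_{C^1},\|\varphi\|_{\ell,2}\}\,\|f\|_{C^\ell}\,\htop(x)^{\omega}e^{-\delta_1 n}.
\]
When $e^{-\kappa n}>c\,\htop(x)^{-1}$, the $r^{-O(\ell)}$ loss is a power of the height; when the opposite holds, it is absorbed by $e^{\kappa O(\ell) n}\ll e^{\sigma n/2}$.

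\emph{Main obstacle.} The delicate step is the injectivity and partition bookkeeping near the cusp. We must verify that the thickened box embeds at scale $\asymp\htop(x)^{-1}$, and that the polynomial losses in the number of pieces and their Sobolev norms combine into a single power $\htop(x)^\omega$. The regime $n$ small is trivial, since the estimate there follows from $|\cdot|\leq 2\|\varphi\|_\infty\|f\|_{L^1}$ and the height factor.
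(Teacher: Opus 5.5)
Your proposal is correct and follows the same route as the paper. The paper's proof simply invokes the Margulis thickening argument of \cite{KM96}, via the proof of \cite[Theorem~2.5]{KM20} at integer times. That argument is your construction: thicken in the weak-stable directions at scale $r=\min\{c\,\htop(x)^{-1},e^{-\kappa n}\}$, apply exponential mixing, and compare using that $\Ad(a^n)$ is non-expanding on $\mathfrak h^{0-}$. One statement needs correcting: injectivity holds on $B_r^{H^{0-}}B_s^H$ with $s\asymp\htop(x)^{-1}$, not on $B_r^{H^{0-}}B_2^H$; this is exactly why your partition of $f$ into $\ll\htop(x)^{\dim H}$ pieces, based at translates $h_ix$ of comparable height, is needed, and it costs only a polynomial power of $\htop(x)$.
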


\begin{proof}
The proof is based on the thickening argument, which goes back to
\cite{KM96}; the proposition follows by applying the proof of
\cite[Theorem~2.5]{KM20} at integer times.
\end{proof}

\begin{prop}
\label{prop:quantitative-nondivergence}
There is $\kappa_1>0$ such that, for every compact $K\subset X$, $R>0$,
$n\geq0$, $x\in K$, and $r>0$,
\[
 m_H\left\{h\in B_R^H:\htop(\lambda_n(h)x)>r^{-1}\right\} \ll_{K,R}r^{\kappa_1}.
\]
\end{prop}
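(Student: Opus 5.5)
The plan is to deduce the statement from the quantitative non-divergence theorem of Kleinbock--Margulis for expanding horospherical translates, in the form of \cite{KM96} together with its extension to general finite-volume semisimple quotients. We use the standard reduction-theory description of the cusp: there are finitely many representations $\rho_i:G\to\operatorname{GL}(V_i)$ and discrete $\Gamma$-invariant families of vectors, for instance from Siegel sets or Margulis functions, such that $\htop(gx_0)$ is comparable, up to a power, to $\max_i\max_v\|\rho_i(g)v\|^{-1}$. In particular, $\htop(y)>r^{-1}$ forces some associated ``flag vector'' of $y$ to have norm $\ll r^{c}$ for a fixed $c>0$. Thus it suffices to bound
\[
 m_H\{h\in B_R^H:\ \text{some flag vector of } \lambda_n(h)x \text{ has norm}<\rho\}\ll_{K,R}\rho^{\alpha}.
\]
We then take $\kappa_1=c\alpha$.

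For this bound we apply the $(C,\alpha)$-good machinery on the fixed ball $B_R^H$. Since $H$ is unipotent, the coordinate functions $h\mapsto\|\rho_i(a^nha^{-n})gv\|^2$, where $x=gx_0$, are polynomials in exponential coordinates on $B_R^H$ with degree bounded independently of $n$. Such polynomials are $(C,\alpha)$-good on $B_R^H$ uniformly in $n$, with respect to the Besicovitch/Federer structure of $(H,d_H,m_H)$ on bounded balls. The Kleinbock--Margulis theorem then gives
\[
 m_H\{h\in B_R^H:\lambda_n(h)x\notin K_\rho\}\ll\Bigl(\frac{\rho}{\eta}\Bigr)^{\alpha}m_H(B_R^H).
\]
This holds provided the nondegeneracy condition is satisfied: for every flag subgroup $\Delta$ (primitive vector $v$), we need $\sup_{h\in B_R^H}\|\rho(\lambda_n(h))gv\|\geq\eta$.

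The main obstacle is checking this nondegeneracy condition with an $\eta$ that is uniform in $n\geq0$ and $x\in K$. First, for $n=0$, uniformity on $K$ comes from compactness: every relevant vector of $gv$ with $g$ in a compact lift of $K$ has norm bounded below. Second, to pass from $n=0$ to $n>0$, we use that $a^n$ expands $H$. Write $\rho(a^nha^{-n})\rho(a^n)\rho(a^{-n})w$; since the base point lies in $K$, only $\rho(a^n h)w$ matters, up to bounded error. The standard argument of \cite[\S5]{KM96} shows that $\sup_{h\in B_R^H}\|\rho(a^nh)w\|\gg\|w\|$. The reason is that the $H$-orbit of a nonzero vector cannot stay in the sum of the nonexpanding $a$-weight spaces. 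More precisely, the projection of $\rho(h)w$ onto the highest weight space in its $H$-span is a nonzero polynomial, so its supremum over $B_R^H$ is $\gg_R\|w\|$. The weights involved are $\geq1$ for $a$, with non-negative exponents. This gives $\eta\gg_{K,R}1$.

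Combining these, we get the estimate with $r^{\kappa_1}$, and the implied constants depend only on $K$ and $R$. If a lattice is reducible or of higher rank, the same argument applies with the Dani--Margulis/Kleinbock--Margulis description of the cusp via finitely many parabolic representations. Alternatively, and more simply, one can cite the known uniform nondivergence result for expanding horospheres directly, as in \cite[Theorem~2.5]{KM20} and its proof.
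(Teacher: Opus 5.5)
There is an error in the step you single out as the main obstacle, though it is easily repaired. Your overall route is right, and in the arithmetic case it is essentially the paper's. The paper embeds into $\operatorname{SL}_M(\R)/\operatorname{SL}_M(\Z)$ and applies the Kleinbock--Margulis type non-divergence theorem \cite[Theorem~5.2]{LMMS2024quantitative} to the polynomial maps $h\mapsto\lambda_n(h)g$. It then converts ``no short vector'' into $\inj\gg\epsilon^k$ via \cite[Proposition~3.5]{KM2012effective}.

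The error is your claim that in $\rho(a^nha^{-n})w$ ``only $\rho(a^nh)w$ matters, up to bounded error.'' This is false: $\rho(\lambda_n(h))w=\rho(a^nh)\rho(a^{-n})w$, and $\rho(a^{-n})$ is unbounded. The set in the proposition is $\lambda_n(B_R^H)x=a^nB_R^H(a^{-n}x)$, an expanded horospherical ball centered at $x$, not the translate $a^nB_R^Hx$. Applied honestly, the expanding-translate estimate from \cite{KM96} gives only $\sup_h\|\rho(\lambda_n(h))w\|\gg\|\rho(a^{-n})w\|$. This decays exponentially in $n$ when $w$ has only expanding components, so it yields no uniform $\eta$.

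The obstacle is in fact illusory. Since $e\in B_R^H$ and $\lambda_n(e)=e$,
\[
 \sup_{h\in B_R^H}\|\rho(\lambda_n(h))w\|\geq\|w\|,
\]
and $\|w\|$ is bounded below uniformly for $x\in K$ by compactness, which is your own $n=0$ observation. This one-line remark is exactly how the paper verifies the covolume hypothesis. With it, your argument goes through in the arithmetic case, provided you take the Kleinbock--Margulis ball in exponential coordinates centered at $0$ and containing $\exp^{-1}(B_R^H)$.

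Two further points. First, your claim that reducible, higher-rank, or other lattices go ``the same way'' with a uniform flag-vector description is too quick. The paper first applies Shi's reductions \cite[Lemmas~3.6--3.7]{Shi21}: center-free, no compact factors, irreducible $\Gamma$, together with a pointwise injectivity-radius comparison. It then uses arithmeticity in higher rank and treats real rank one separately via \cite[Lemma~3.3]{Shi21}. A rank-one lattice need not be arithmetic, in which case there is no embedding into $\operatorname{SL}_M(\R)/\operatorname{SL}_M(\Z)$. The flag structure must then be replaced by a genuinely rank-one argument, for example one using disjointness of cusp neighborhoods, and your sketch does not supply it.

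Second, your fallback citation \cite[Theorem~2.5]{KM20} is the effective equidistribution theorem, which the paper uses for Proposition~\ref{prop:effective-equidistribution}; it is not a non-divergence result. It also does not apply directly here. Viewed as an expanding translate, $\lambda_n(B_R^H)x$ has base point $a^{-n}x$, whose height is unbounded in $n$.
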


\begin{proof}
By the reductions in \cite[proofs of Lemmas~3.6--3.7]{Shi21},
we may assume that $G$ is center-free with no compact factors and that
$\Gamma$ is irreducible; compact quotients are immediate.
It remains to consider the nonuniform higher-rank arithmetic case
and the real rank-one case.

In the arithmetic case, use the embedding into
$\operatorname{SL}_M(\mathbb R)/\operatorname{SL}_M(\mathbb Z)$ provided by the reduction and apply \cite[Theorem~5.2]{LMMS2024quantitative}.
Compactness of the base-point set and the identity $\lambda_n(e)=e$
give the uniform lower bounds for the covolumes of all primitive
subgroups required in its statement.
The resulting measure estimate controls the set of elements in the horospherical ball for which the corresponding lattice leaves the Mahler compact set $K_\epsilon$ of lattices with no nonzero vector shorter than $\epsilon$.
By \cite[Proposition~3.5]{KM2012effective}, the injectivity radius on $K_\epsilon$ is uniformly bounded below by $\gg\epsilon^k$.
Together with the pointwise injectivity-radius comparison in the proof
of \cite[Lemma~3.7]{Shi21}, the measure estimate therefore gives
the required power-law bound for $\htop$ on the original space.

In real rank one, the estimate follows from
\cite[Lemma~3.3]{Shi21}, particularly equations~(3.8)--(3.10).
Combining the two cases via Shi's reduction proves the
proposition.
\end{proof}

\begin{prop}\label{prop:cusp-tail}
There is $\kappa_2>0$ such that, for all $r>0$,
\begin{equation*}
 m_X\left\{x\in X:\htop(x)>r^{-1}\right\}\ll r^{\kappa_2}.
\end{equation*}
\end{prop}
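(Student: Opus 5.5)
We prove Proposition~\ref{prop:cusp-tail} by the same reduction used for Proposition~\ref{prop:quantitative-nondivergence}, now applied to Haar measure instead of horospherical pieces.

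\textbf{Reductions.} If $X$ is compact, $\htop$ is bounded. Hence the set in question is empty once $r$ is small, and for larger $r$ the bound is trivial after enlarging the implied constant. We may also assume $r<1$ throughout, since for $r\geq1$ the left-hand side is at most $1\leq r^{\kappa_2}$. Passing to a finite cover or quotient changes the injectivity radius only by bounded factors (as in the proof of \cite[Lemma~3.7]{Shi21}) and pushes Haar measure forward to Haar measure. We may therefore follow Shi's reductions and assume $G$ is center-free without compact factors, $\Gamma$ is irreducible, and we are in one of two cases: $\Gamma$ is nonuniform arithmetic of higher rank, or $G$ has real rank one. In both cases it suffices to prove a polynomial tail bound for some height function $\htop'$ that is polynomially comparable to $\htop$, since $\htop\ll\htop'^{\,k}$ for a fixed $k$ only changes the exponent.

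\textbf{Arithmetic case.} We use the embedding $X\hookrightarrow\operatorname{SL}_M(\R)/\operatorname{SL}_M(\Z)$. By \cite[Proposition~3.5]{KM2012effective}, $\inj(x)\gg\epsilon^k$ whenever the lattice of $x$ has no nonzero vector shorter than $\epsilon$. Hence $\{\htop>r^{-1}\}$ is contained in the set where the shortest vector has length $\ll r^{1/k}$. To bound the Haar measure of this set, the cleanest route is to average Proposition~\ref{prop:quantitative-nondivergence}. Fix a compact $K\subset X$ of positive measure and a neighborhood of the form $K'=B^{G}_{1}K$. By $a$-invariance and mixing, $m_X(E)=m_X(a^{n}E)$ for every $n$, and $a^{-n}$-translates of local product boxes $B^{H}_1\,B^{\tilde P}_1\,x$ (with $\tilde P$ the weak-stable group) cover $X$ up to small measure. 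Given this, the measure of $\{\htop>r^{-1}\}$ is bounded by an average over $x\in K$ and $p$ in a compact piece of $\tilde P$ of
\[
 m_H\{h\in B_1^H:\htop(\lambda_n(h)px)>r^{-1}\},
\]
plus $m_X(X\setminus a^nB K)$. The first term is $\ll r^{\kappa_1}$ by Proposition~\ref{prop:quantitative-nondivergence}, using~\eqref{eq:height-compact-change} to absorb $p$. The second term tends to $0$ as $n\to\infty$, so we let $n\to\infty$.

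A more direct alternative is to use Siegel's formula: the measure of lattices with a primitive vector of length $<\epsilon$ is $\ll\epsilon^{M}$ in $\operatorname{SL}_M$. This bound then pulls back to $X$ via the closed orbit, which carries a finite Haar measure but is not the full space.

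\textbf{Rank one, and the main obstacle.} In rank one we use the Garland--Raghunathan reduction theory, as in \cite[Lemma~3.3]{Shi21}. The cusp neighborhoods are finitely many Siegel sets $\mathfrak S=\Omega A_{t\geq t_0}K$, where $\htop\asymp e^{c t}$ and the Haar density in the $A$-coordinate is $e^{-2\rho t}$. Integrating this density gives $m_X\{\htop>r^{-1}\}\ll r^{2\rho/c}$.

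The main obstacle is the arithmetic case: the pullback from $\operatorname{SL}_M(\R)/\operatorname{SL}_M(\Z)$ does not directly control Haar measure on the subvariety. For this reason I would rely on the averaging argument built on Proposition~\ref{prop:quantitative-nondivergence}, which works uniformly in both cases and in fact makes the case split unnecessary.
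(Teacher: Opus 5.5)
\textbf{The averaging step fails as written.} You rest the arithmetic case, and by your own account the whole proof, on the averaging argument. Its final step is false. Since $m_X$ is $a$-invariant, $m_X(X\setminus a^nBK)=m_X(X\setminus BK)$ for every $n$, so this term does not tend to $0$.

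This is not a bookkeeping slip. $G$-invariance gives the exact identity
\[
 m_X(E)=\int_X m_H\{h\in B_1^H:\lambda_n(h)x\in E\}\dd m_X(x)
\]
for every $n$. Proposition~\ref{prop:quantitative-nondivergence} controls the integrand only for $x$ in a fixed compact $K$. The leftover $m_X(X\setminus K)$ is independent of $n$. Letting $K$ grow would require knowing how the implied constant depends on $K$, and that is essentially the cusp-tail bound you are trying to prove. So no bound in $r$ comes out, and the mixing you invoke plays no actual role.

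\textbf{Repair, and comparison with the paper.} The idea can be saved only by using mixing multiplicatively. Take a small box $Q=B^{\tilde P}_\delta B_1^Hx_0$. Mixing of $a$, extended to indicator functions by $L^2$-density, gives $m_X(E)\,m_X(Q)=\lim_n m_X(a^{-n}E\cap Q)$. For $y=phx_0$ one has $a^ny=(a^npa^{-n})\,a^nhx_0$ with $a^npa^{-n}$ bounded. So by \eqref{eq:height-compact-change} you need
\[
 m_H\{h\in B_1^H:\htop(a^nhx_0)>cr^{-1}\}\ll r^{\kappa_1}
 \quad\text{for large } n.
\]
Even this is not Proposition~\ref{prop:quantitative-nondivergence} verbatim. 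The expanded piece $\lambda_n(B_1^H)a^nx_0$ is centred at $a^nx_0$, not in a fixed compact set. You would first use Proposition~\ref{prop:effective-equidistribution} to find a point of it in a fixed compact set for $n\geq n_0(x_0)$. Then recentre as in the proof of Proposition~\ref{prop:compact-return-average}, with $R=3$.

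Even repaired, this route makes the cusp-tail bound depend on mixing of $a$, which the proposition does not assume. The paper avoids all of this. By \cite[Lemma~3.14]{BH26}, $|\log\inj(x)|\ll d_X(x,x_0)+1$, so $\{\htop>r^{-1}\}\subset\{d_X(x,x_0)>C_1\log r^{-1}-C_2\}$. The exponential tail $m_X\{d_X(x,x_0)>s\}\ll e^{-cs}$ from \cite[Proposition~5.1]{KM1999logarithm} then gives the claim with $\kappa_2=cC_1$. This needs no dynamical hypothesis, no reductions, and no case split.
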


\begin{proof}
If $X$ is compact, the assertion is immediate.  In the noncompact
case, fix $x_0\in X$. By the elementary conjugation argument in the
proof of \cite[Lemma~3.14]{BH26}, for every $x\in X$,
\[
\left|\log \inj(x) \right| \ll d_X(x,x_0)+1.
\]
Hence, for suitable $C_1,C_2>0$,
\[
 \left\{x\in X:\htop(x)>r^{-1}\right\} \subset \left\{x\in X:d_X(x,x_0)>C_1\log(r^{-1})-C_2\right\}.
\]
It follows from \cite[Proposition~5.1]{KM1999logarithm} (see also \cite[Section~10.3]{KM1999logarithm}) that there is $c>0$ such that, for every $s\geq0$,
\[
 m_X\left\{x\in X:d_X(x,x_0)>s\right\}\ll e^{-cs}.
\]
Let $s=C_1\log(r^{-1})-C_2>0$ and $\kappa_2=cC_1$.
Then the proposition follows immediately for $r$ small enough so that $s>0$.
The remaining range of $r$ is absorbed by increasing the implied constant.
\end{proof}

\section{Contracting convolutions}\label{sec:convolution}

We construct contracting convolutions on $H$ that provide the perturbations
used in the martingale argument.  We prove that their rescaled tails have
uniformly controlled smooth densities and construct a uniformly controlled
partition of the corresponding expanding images of horospherical balls.

Fix a contraction rate $\chi>0$ as in
Lemma~\ref{lem:conjugation-adjoint}(ii), and choose $r>0$ so that
\[
 r\sum_{j\geq0}e^{-\chi j}<1.
\]
Fix a nonnegative $\eta\in C_c^\infty(H)$ with
$\int_H\eta\dd m_H=1$ and $\supp\eta\subset B_r^H$.

Let $\nu$ be the probability measure defined by
\[
 \dd\nu(h)=\eta(h)\dd m_H(h),
\]
and, for $j\geq0$, define
\[
 \nu_j=(\lambda_{-j})_*\nu.
\]
Thus, for every Borel set $E\subset H$ and $f\in C_c(H)$,
\[
 \nu_j(E)=\nu(\lambda_j(E)),
 \qquad
 \int_Hf(h)\dd\nu_j(h)
 =\int_Hf(\lambda_{-j}(h))\eta(h)\dd m_H(h).
\]
Since $m_H(\lambda_j(E))=J_jm_H(E)$, changing variables by $\lambda_j$
gives
\[
 \frac{\dd\nu_j}{\dd m_H}(h)=J_j\eta(\lambda_j(h)),
 \qquad
 \supp\nu_j=\lambda_{-j}(\supp\eta).
\]
For probability measures $\mu_1,\mu_2$ on $H$ and $F\in C_c(H)$, we use the convolution convention
\[
 \int F\dd(\mu_1*\mu_2) = \iint F(uv)\dd\mu_1(u)\dd\mu_2(v).
\]
For $0\leq k<N$ set
\begin{equation}\label{eq:mu-k}
 \mu_k^{(N)}=\nu_{N-1}*\nu_{N-2}*\cdots*\nu_k.
\end{equation}
We also set $\mu_N^{(N)}=\delta_e$.  Since $\lambda_k$ is an
automorphism of $H$, for $j\geq k$ we have
\[
 (\lambda_k)_*\nu_j
 =(\lambda_k\circ\lambda_{-j})_*\nu
 =(\lambda_{-(j-k)})_*\nu
 =\nu_{j-k}.
\]
It follows that
\begin{equation}\label{eq:rescaled-mu}
 (\lambda_k)_*\mu_k^{(N)}
 =\mu_0^{(N-k)}
 =\mu_1^{(N-k)}*\nu.
\end{equation}
Thus rescaling at time $k$ isolates the fixed smooth factor $\nu$.

The next lemma records the uniform density estimate used below.
\begin{lem}\label{lem:uniform-densities}
Let $\ell,N\in\Z_{\geq 1}$,
$0\leq k\leq N-1$, and $s\in\Z_{\geq k}$.  Put $q=s-k$.
The measure $\zeta=(\lambda_s)_*\mu_k^{(N)}$ is of the form
\begin{equation*}
 \frac{\dd\zeta}{\dd m_H}(h)=J_q^{-1}F(h),
\end{equation*}
for some $F\in C_c^\infty(H)$ satisfying
\[
 F\geq0,\quad \supp F\subset \lambda_q(B_1^H),\quad
 \int_HF\dd m_H=J_q,\quad
 \|F\|_{C^\ell}\ll_\ell1.
\]
If $k\leq N-2$, the same conclusion holds for every $v\in\supp\nu$ when the last factor $\nu_k$ in \eqref{eq:mu-k} is replaced by $\delta_{\lambda_{-k}(v)}$.
\end{lem}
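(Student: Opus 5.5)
The plan is to reduce, via \eqref{eq:rescaled-mu}, to a single convolution whose last factor is the fixed smooth measure $\nu$, and then to undo the rescaling by $\lambda_q$.

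\emph{Reduction.} Since $\lambda_s=\lambda_q\circ\lambda_k$, \eqref{eq:rescaled-mu} gives
\[
 \zeta=(\lambda_q)_*\mu_0^{(N-k)},\qquad \mu_0^{(N-k)}=\mu'*\nu,\qquad \mu':=\mu_1^{(N-k)}.
\]
Here $\mu'$ is a probability measure, and $\mu'=\delta_e$ when $k=N-1$. Suppose $\rho$ is a density of $\mu_0^{(N-k)}$ with respect to $m_H$. Since $m_H(\lambda_q(E))=J_qm_H(E)$, the change-of-variables computation of Section~\ref{sec:convolution} shows that $\zeta$ has density $J_q^{-1}\rho(\lambda_{-q}(h))$. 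So I set $F=\rho\circ\lambda_{-q}$. Then $F\geq0$, and $\int F\dd m_H=J_q\int\rho\dd m_H=J_q$. It therefore remains to prove three things:
\[
 \supp\rho\subset B_1^H,\qquad \rho\in C_c^\infty(H),\qquad \|\rho\circ\lambda_{-q}\|_{C^\ell}\ll_\ell1 .
\]

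\emph{Support.} Right-invariance of $d_H$ gives $d_H(uv,e)\leq d_H(uv,v)+d_H(v,e)=d_H(u,e)+d_H(v,e)$. By \eqref{eq:contraction}, $\supp\nu_j=\lambda_{-j}(\supp\eta)\subset B^H_{re^{-\chi j}}$. Hence every element of $\supp\mu_0^{(N-k)}$ has distance less than $r\sum_{j\ge0}e^{-\chi j}<1$ from $e$. This gives $\supp\rho\subset B_1^H$ and hence $\supp F\subset\lambda_q(B_1^H)$. The same bound shows that $\supp\mu'\subset B_1^H$.

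\emph{Smoothness.} Unwinding the convolution convention against $\dd\nu=\eta\dd m_H$, using left invariance of $m_H$, gives
\[
 \rho(h)=\int_H\eta(u^{-1}h)\dd\mu'(u).
\]
By Lemma~\ref{lem:conjugation-adjoint}(iii) for left translation, $Y_I[\eta(u^{-1}\cdot)]$ is a product of derivatives along $\Ad(u^{-1})Y_i$, evaluated at $u^{-1}h$. Because $u$ ranges over the compact set $\overline{B_1^H}$, the operators $\Ad(u^{-1})|_{\mathfrak h}$ are uniformly bounded. Expanding each $\Ad(u^{-1})Y_i$ in the basis $Y_1,\ldots,Y_d$ then yields $\|Y_I[\eta(u^{-1}\cdot)]\|_\infty\ll_\ell\|\eta\|_{C^\ell}$. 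Differentiating under the integral gives $\|\rho\|_{C^\ell}\ll_\ell1$. Next, Lemma~\ref{lem:conjugation-adjoint}(iii) with $g=a^{-q}$ expresses $Y_I(\rho\circ\lambda_{-q})$ through derivatives along the vectors $\Ad(a^{-q})Y_i$. By Lemma~\ref{lem:conjugation-adjoint}(ii) these have norm at most $\|Y_i\|$, uniformly in $q$. So again $\|F\|_{C^\ell}\ll_\ell\|\rho\|_{C^\ell}\ll_\ell1$. This uniformity in $q$ is the crux: we rescale forward by $\lambda_q$ only after isolating a fixed smooth factor, so the chain rule produces contracted rather than expanded vectors.

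\emph{The variant with $\nu_k$ replaced by $\delta_{\lambda_{-k}(v)}$.} This is the step I expect to need the most care, because the smooth factor $\nu$ disappears. After applying $\lambda_k$ the measure becomes
\[
 \mu''*\nu_1*\delta_v,\qquad \mu'':=\nu_{N-k-1}*\cdots*\nu_2 .
\]
Here $\mu''=\delta_e$ if $k=N-2$, and the hypothesis $k\leq N-2$ guarantees that the factor $\nu_1$ is present. I would use $\nu_1$ as the smooth factor instead. Its density $J_1\eta(\lambda_1(h))$ is a fixed function in $C_c^\infty(H)$, with $C^\ell$ norm depending only on $\eta$, $a$ and $\ell$. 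The density of the new measure is then
\[
 \rho(h)=\int_H J_1\eta\bigl(\lambda_1(u^{-1}hv^{-1})\bigr)\dd\mu''(u).
\]
The right translation by $v^{-1}$ commutes with the derivatives $Y_I$ by Lemma~\ref{lem:conjugation-adjoint}(iii). The left translation is handled exactly as before, since $u\in B_1^H$. The support estimate is unchanged because $v\in B_r^H$, so the sum of the distances of the factors from $e$ is still bounded by $r\sum_j e^{-\chi j}<1$. The rest of the argument then goes through verbatim.
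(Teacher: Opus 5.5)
Your proposal is correct and follows essentially the same route as the paper. Both arguments reduce via \eqref{eq:rescaled-mu} to the density $\int_H\eta(u^{-1}h)\dd\mu_1^{(N-k)}(u)$ and bound its derivatives using the left-translation identity together with compactness of $\overline{B_1^H}$. Both then transport by $\lambda_{-q}$ using the contraction of $\Ad(a^{-q})$, and in the point-mass case both use $\nu_1$ as the smooth factor, noting that the right translation by $v^{-1}$ is harmless. The only cosmetic difference is that you treat all $q\geq0$ at once instead of first proving the case $s=k$.
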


\begin{proof}
We first prove the assertion when $s=k$.
By \eqref{eq:rescaled-mu}, the rescaled measure has a smooth density $F_0$ given by
\begin{equation}\label{eq:rescaled-convolution}
 \frac{\dd(\lambda_k)_*\mu_k^{(N)}}{\dd m_H}(h)
 =F_0(h)
 =\int_H\eta(u^{-1}h)\dd\mu_1^{(N-k)}(u).
\end{equation}
By the choice of $\eta$, the support identity above, and
\eqref{eq:contraction}, for every $j\geq0$,
\[
 \supp\nu_j
 \subset\lambda_{-j}(B_r^H)
 \subset B_{re^{-\chi j}}^H.
\]
The right invariance of $d_H$ therefore gives
\[
 \supp\mu_1^{(N-k)}
 \subset B_{r\sum_{j\geq1}e^{-\chi j}}^H,
 \qquad
 \supp F_0
 \subset B_{r\sum_{j\geq0}e^{-\chi j}}^H
 \subset B_1^H.
\]
By Lemma~\ref{lem:conjugation-adjoint}(iii), differentiating
\eqref{eq:rescaled-convolution} gives, for $I=(i_1,\ldots,i_j)$,
\[
 Y_IF_0(h)
 =\int_H
 \bigl[(\Ad(u^{-1})Y_{i_1})\cdots
        (\Ad(u^{-1})Y_{i_j})\eta\bigr](u^{-1}h)
 \dd\mu_1^{(N-k)}(u).
\]
The integration variable lies in $B_1^H$.
Since $\Ad$ is continuous and $\overline{B_1^H}$ is compact, the constant
\[
 M=\sup_{u\in\overline{B_1^H}}
 \|\Ad(u^{-1})|_{\mathfrak h}\|
\]
is finite and independent of $N$ and $k$.
We can write $\Ad(u^{-1})Y_{i_j}$ as a linear combination of the basis elements, with coefficients bounded by $\ll M$, and then applying linearity it follows that
\[
 \|Y_IF_0\|_\infty
 \ll_j M^j\|\eta\|_{C^j}.
\]
Taking the maximum over $0\leq j\leq\ell$ gives
\begin{equation}\label{eq:uniform-rescaled-density}
 \|F_0\|_{C^\ell}
 \ll_\ell\|\eta\|_{C^\ell}
 \ll_\ell1.
\end{equation}
Since $(\lambda_k)_*\mu_k^{(N)}$ is a probability measure,
\[
 F_0\geq0,
 \qquad
 \int_HF_0\dd m_H=1.
\]
This proves the assertion for $s=k$, with $F=F_0$.

Now let $s>k$.  Since $q=s-k$, $\lambda_s=\lambda_q\circ\lambda_k$, and $m_H(\lambda_q(E))=J_qm_H(E)$,
\begin{equation*}
 \frac{\dd(\lambda_s)_*\mu_k^{(N)}}{\dd m_H}(h)
 =J_q^{-1}F_0(\lambda_{-q}(h)).
\end{equation*}
Thus $F=F_0\circ\lambda_{-q}$ satisfies
\[
 F\geq0,\qquad
 \supp F\subset\lambda_q(B_1^H),\qquad
 \int_HF\dd m_H=J_q.
\]
Since $\lambda_{-q}=C_{a^{-q}}$,
Lemma~\ref{lem:conjugation-adjoint}(iii) gives the ordered derivative identity
\[
 Y_IF(h)
 =\bigl[(\Ad(a^{-q})Y_{i_1})\cdots
        (\Ad(a^{-q})Y_{i_j})F_0\bigr](\lambda_{-q}(h)).
\]
Lemma~\ref{lem:conjugation-adjoint}(ii) and
\eqref{eq:uniform-rescaled-density} therefore give
\begin{align*}
 \|Y_IF\|_\infty&\ll_j e^{-j\chi q}\|F_0\|_{C^j},\\
 \|F\|_{C^\ell}&\ll_\ell\|F_0\|_{C^\ell}\ll_\ell1.
\end{align*}

Suppose $k\leq N-2$. Since $\nu_1$ has smooth density
$\eta_1(h)=J_1\eta(\lambda_1(h))$, the convolution
$\mu_1^{(N-k)}=\mu_2^{(N-k)}*\nu_1$ has a smooth density $F_1$
with respect to $m_H$, given by
\[
 \frac{\dd\mu_1^{(N-k)}}{\dd m_H}(h)=F_1(h)
 =\int_H\eta_1(u^{-1}h)\dd\mu_2^{(N-k)}(u).
\]
Using the left-translation identity in Lemma~\ref{lem:conjugation-adjoint}(iii)
for $\eta_1$ and repeating the preceding derivative estimate gives
\[
 \supp F_1\subset B_{r\sum_{j\geq1}e^{-\chi j}}^H,
 \qquad
 \|F_1\|_{C^\ell}\ll_\ell1.
\]

For $v\in\supp\nu$, unimodularity of $H$ gives
\begin{align*}
 & (\lambda_k)_*
 \left(\nu_{N-1}*\cdots*\nu_{k+1}*
 \delta_{\lambda_{-k}(v)}\right)
 =\mu_1^{(N-k)}*\delta_v,\\
 & \frac{\dd(\mu_1^{(N-k)}*\delta_v)}{\dd m_H}(h)
 =F_1(hv^{-1}),\\
 & \supp\bigl(F_1(\,\cdot\,v^{-1})\bigr)
 \subset B_{r\sum_{j\geq0}e^{-\chi j}}^H
 \subset B_1^H.
\end{align*}
The right-translation identity in Lemma~\ref{lem:conjugation-adjoint}(iii)
and the bijectivity of $h\mapsto hv^{-1}$ give
\[
 \|F_1(\,\cdot\,v^{-1})\|_{C^\ell}
 =\|F_1\|_{C^\ell}\ll_\ell1.
\]
Applying the preceding $\lambda_q$-pushforward calculation to this density, with
\[
 F_v(h)=F_1\bigl(\lambda_{-q}(h)v^{-1}\bigr),
\]
proves the point-mass assertion.
\end{proof}

We also need a uniformly controlled partition of the expanding set $\lambda_q(B_1^H)$.

\begin{lem}\label{lem:partition}
For every $\ell\in\Z_{\geq1}$ and $q\in\Z_{\geq0}$, there exist a finite index set $\mathcal I_q$, points $g_i\in\lambda_q(B_1^H)$, and functions $\psi_i\in C_c^\infty(H)$ for $i\in\mathcal I_q$, such that
\begin{enumerate}[label=(\roman*)]
\item
\[
 0\leq\psi_i\leq1,
 \qquad
 \sum_{i\in\mathcal I_q}\psi_i=1
 \quad\text{on }\lambda_q(B_1^H);
\]
\item
\[
 \supp\psi_i\subset B_1^Hg_i,
 \qquad
 \|\psi_i\|_{C^\ell}\ll_\ell1;
\]
\item the sets $B_{1/10}^Hg_i$, $i\in\mathcal I_q$, are pairwise disjoint;
\item $\#\mathcal I_q\asymp J_q$.
\end{enumerate}
\end{lem}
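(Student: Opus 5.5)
Let $E_q=\lambda_q(B_1^H)$. The plan is a standard maximal-separated-set partition of unity built from right translates of one fixed bump on $H$. Right translates are the natural choice: $d_H$ is right-invariant, and by Lemma~\ref{lem:conjugation-adjoint}(iii) right translation preserves $C^\ell$ norms exactly.

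\emph{Centers and disjointness.} Choose a maximal subset $\{g_i\}_{i\in\mathcal I_q}\subset E_q$ with $d_H(g_i,g_j)\geq 1/5$ for $i\neq j$. Since $E_q$ is relatively compact and $m_H$ is right-invariant, the balls $B_{1/10}^Hg_i$ are pairwise disjoint by the triangle inequality, because $B_\rho^Hg=\{h:d_H(h,g)<\rho\}$. They all lie in the $1/10$-neighbourhood $E_q^{+}$ of $E_q$, so $\mathcal I_q$ is finite. Maximality gives $E_q\subset\bigcup_i B_{1/5}^Hg_i$. This establishes (iii).

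\emph{Partition of unity.} Fix $\rho\in C_c^\infty(H)$ with $0\leq\rho\leq1$, $\rho=1$ on $B_{1/5}^H$ and $\supp\rho\subset B_{1/2}^H$. Set $\rho_i(h)=\rho(hg_i^{-1})$ and $\Sigma=\sum_i\rho_i$. Then $\Sigma\geq1$ on the $1/100$-neighbourhood of $E_q$ after adjusting radii (take $\rho=1$ on $B_{1/4}^H$). Any point lies in $\supp\rho_i$ for at most $L$ indices, where $L$ is independent of $q$. This holds because those $g_i$ lie in a ball of radius $1/2$ around the point and have disjoint $1/10$-balls, so $L\leq m_H(B_{6/10}^H)/m_H(B_{1/10}^H)$ by right-invariance. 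Choose $\theta\in C^\infty(\R)$ with $\theta(t)=t$ for $t\geq1$ and $\theta\geq1/2$ everywhere, and define $\psi_i=\rho_i/\theta(\Sigma)$. Then $0\leq\psi_i\leq1$ and $\sum_i\psi_i=1$ wherever $\Sigma\geq1$, in particular on $E_q$, which gives (i). Also $\supp\psi_i\subset B_{1/2}^Hg_i\subset B_1^Hg_i$. For the $C^\ell$ bound, $\|\rho_i\|_{C^\ell}=\|\rho\|_{C^\ell}$ by the right-translation identity in Lemma~\ref{lem:conjugation-adjoint}(iii). Locally $\Sigma$ is a sum of at most $L$ such terms, so $\|\Sigma\|_{C^\ell}\ll_\ell L$. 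Applying the Leibniz and chain rules to $\rho_i\cdot(1/\theta)\circ\Sigma$ then gives $\|\psi_i\|_{C^\ell}\ll_\ell1$, which is (ii).

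\emph{Cardinality (iv).} This is the only step that needs real care: it requires volume comparisons uniform in $q$.
\begin{itemize}
\item Upper bound: disjointness gives $\#\mathcal I_q\, m_H(B_{1/10}^H)\leq m_H(E_q^{+})$.
\item Lower bound: covering gives $J_q=m_H(E_q)\leq\#\mathcal I_q\,m_H(B_{1/5}^H)$.
\end{itemize}
So it remains to show $m_H(E_q^{+})\ll J_q$. Since $\lambda_{-q}$ is a contraction (Lemma~\ref{lem:conjugation-adjoint}(ii)), it maps $E_q^{+}$ into the $1/10$-neighbourhood of $B_1^H$, which is contained in $B_{2}^H$. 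Hence $m_H(E_q^{+})\leq J_q\,m_H(B_2^H)\ll J_q$. This settles (iv), and the main obstacle is exactly this uniform comparison. The contraction from Lemma~\ref{lem:conjugation-adjoint}(ii) handles it cleanly, so I expect no genuine difficulty.
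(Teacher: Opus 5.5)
Your construction is the same as the paper's: a maximal separated set in $\lambda_q(B_1^H)$, right translates of one fixed bump, normalization by a smooth function of their sum, and the volume comparison through the contraction $\lambda_{-q}$. Your arguments for (ii), (iii), (iv), and for $\sum_i\psi_i=1$ on $\lambda_q(B_1^H)$, are fine.

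The one claim that does not follow from what you wrote is $\psi_i\leq1$ in (i). You only require $\theta(t)=t$ for $t\geq1$ and $\theta\geq1/2$. That says nothing about how $\theta(t)$ compares with $t$ for $t\in(1/2,1)$. Since $\Sigma$ is continuous, compactly supported, and at least $1$ on $\lambda_q(B_1^H)$, there are points $h$ with $\Sigma(h)<1$. Near the outer edge of the supports it can happen that only one $\rho_i$ is nonzero, so $\Sigma(h)=\rho_i(h)=t$. There you get $\psi_i(h)=t/\theta(t)$, and your hypotheses allow $\theta(t)<t$. For instance, a $\theta$ equal to $1/2$ on $[0,0.95]$ is admissible. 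It gives $\psi_i=2\rho_i$ wherever $\Sigma\leq0.95$, which exceeds $1$ at any such point where $\rho_i>1/2$.

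The repair is to additionally require $\theta(t)\geq t$ on $[0,1]$, e.g.\ $\theta(t)\geq\max\{t,1/2\}$ for all $t\geq0$. Then $0\leq\rho_i\leq\Sigma$ gives $0\leq\psi_i\leq\Sigma/\theta(\Sigma)\leq1$ everywhere. This is exactly what the paper's condition $0\leq s\tau(s)\leq1$ encodes, with $\tau=1/\theta$. Later uses of the lemma, such as the bad-cell estimate in Proposition~\ref{prop:compact-return-average}, would tolerate a bound like $\psi_i\leq2$. The lemma as stated, however, needs this extra condition on $\theta$.
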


\begin{proof}
Choose a maximal $1/4$-separated set
\[
 \{g_i:i\in\mathcal I_q\}\subset\lambda_q(B_1^H).
\]
Then
\[
 \lambda_q(B_1^H) \subset\bigcup_{i\in\mathcal I_q}B_{1/4}^Hg_i, \qquad B_{1/10}^Hg_i\cap B_{1/10}^Hg_j=\varnothing
 \quad(i\neq j).
\]
By \eqref{eq:contraction},
\[
 B_1^H\lambda_q(B_1^H) =\lambda_q\left(\lambda_{-q}(B_1^H)B_1^H\right) \subset\lambda_q(B_2^H).
\]
Since $m_H(B_1^H)=1$,
\[
 J_q =m_H\left(\lambda_q(B_1^H)\right) \leq \#\mathcal I_q\,m_H(B_{1/4}^H),
\]
whereas
\[
 \#\mathcal I_q\,m_H(B_{1/10}^H) = m_H\left(\bigsqcup_{i\in\mathcal I_q}B_{1/10}^Hg_i\right) \leq J_qm_H(B_2^H).
\]
Thus
\[
 \#\mathcal I_q\asymp J_q.
\]

Choose $\psi\in C_c^\infty(H)$ such that
\[
 0\leq\psi\leq1,\qquad \psi=1\text{ on }B_{1/3}^H,\qquad \supp\psi\subset B_{1/2}^H,
\]
and put
\[
 \Psi_q(h)=\sum_{j\in\mathcal I_q}\psi(hg_j^{-1}).
\]
Then
\[
 \Psi_q\geq1\quad\text{on }\lambda_q(B_1^H).
\]
The pairwise disjointness above gives
\[
 \sup_{h\in H} \#\{j\in\mathcal I_q:h\in B_{1/2}^Hg_j\}\ll1.
\]
By the right-translation identity in Lemma~\ref{lem:conjugation-adjoint}(iii)
and the bounded overlap,
\[
 \|\Psi_q\|_{C^\ell}\ll \|\psi\|_{C^\ell} \ll_\ell 1.
\]

Choose $\tau\in C^\infty([0,\infty))$ such that
\[
 0\leq s\tau(s)\leq1, \qquad s\tau(s)=1\quad(s\geq1),
\]
and define
\[
 \psi_i(h)=\psi(hg_i^{-1})\tau(\Psi_q(h)).
\]
Then
\[
 0\leq\psi_i\leq1, \qquad \sum_{i\in\mathcal I_q}\psi_i =\Psi_q\tau(\Psi_q)=1 \quad\text{on }\lambda_q(B_1^H),
\]
and
\[
 \supp\psi_i \subset B_{1/2}^Hg_i \subset B_1^Hg_i, \qquad \|\psi_i\|_{C^\ell}\ll_\ell1.
\]
\end{proof}

\section{Averaging after a compact return}\label{sec:compact-return}

In this section we combine effective equidistribution with quantitative
non-divergence to prove a uniform estimate for horospherical averages after
a compact return.  The partition from Lemma~\ref{lem:partition} separates
the controlled-height cells, where equidistribution applies, from the
remaining cells, whose total contribution is controlled by non-divergence.

\begin{prop}\label{prop:compact-return-average}
Let $\ell$ be as in Proposition~\ref{prop:effective-equidistribution} and let $\varphi\in C_c^\infty(X)$.  There is $\delta_2>0$ with the following property.  Suppose $q,t\in\mathbb Z_{\geq0}$, $y\in X$, and $F\in C_c^\infty(H)$ satisfies
\[
 F\geq0,\qquad \supp F\subset\lambda_q(B_1^H),\qquad \int_HF\dd m_H=J_q.
\]
If there is $h_*\in\lambda_q(B_1^H)$ such that $h_*y\in\supp\varphi$, then
\begin{equation}\label{eq:compact-return-average}
 \left| \frac1{J_q}\int_HF(h)\varphi(a^thy)\dd m_H(h)-m_X(\varphi) \right| \ll_\varphi\|F\|_{C^\ell}e^{-\delta_2t}.
\end{equation}
\end{prop}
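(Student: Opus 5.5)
We decompose $F$ using the partition from Lemma~\ref{lem:partition}. Write $F=\sum_{i\in\mathcal I_q}F\psi_i$, so each piece is supported in $B_1^Hg_i$ with $g_i\in\lambda_q(B_1^H)$. We then translate each piece to the identity: set $f_i(h)=(F\psi_i)(hg_i)$ and $y_i=g_iy$. This gives
\[
 \int_HF(h)\varphi(a^thy)\dd m_H(h)=\sum_{i}\int_Hf_i(h)\varphi(a^thy_i)\dd m_H(h).
\]
Here $\supp f_i\subset B_1^H$, and by the right-translation identity in Lemma~\ref{lem:conjugation-adjoint}(iii) together with the Leibniz rule, $\|f_i\|_{C^\ell}\ll_\ell\|F\|_{C^\ell}$. (Rescaling $f_i$ slightly, or using $B_2^H$ with the obvious modification of Proposition~\ref{prop:effective-equidistribution}, handles support issues.) Since $\sum_i m_H(f_i)=J_q$ and $\#\mathcal I_q\asymp J_q$, the normalized quantity becomes an average over $\asymp J_q$ cells of errors $\int f_i\varphi(a^t\cdot y_i)-m_H(f_i)m_X(\varphi)$.

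We control the heights of the base points $y_i$ as follows. The hypothesis $h_*y\in\supp\varphi$ gives $y=h_*^{-1}y'$ with $y'$ in the compact set $K=\supp\varphi$. Write $h_*=\lambda_q(u_*)$ with $u_*\in B_1^H$. Then $g_i h_*^{-1}=\lambda_q(w_i)$, where $w_i=\lambda_{-q}(g_i)u_*^{-1}\in B_2^H$. Hence $y_i=\lambda_q(w_i)y'$, which is exactly the setting of Proposition~\ref{prop:quantitative-nondivergence} with $R=3$ and base point in $K$.

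Fix a threshold $\rho=e^{-\alpha t}$ for a small $\alpha>0$ to be chosen. The cells fall into two classes.
\begin{itemize}
\item \emph{Good cells}, with $\htop(y_i)\le\rho^{-1}$. Here Proposition~\ref{prop:effective-equidistribution} gives error $\ll_\varphi\|F\|_{C^\ell}e^{\omega\alpha t}e^{-\delta_1 t}$ per cell. Summing over at most $\#\mathcal I_q\ll J_q$ cells and dividing by $J_q$ yields $\ll\|F\|_{C^\ell}e^{-(\delta_1-\omega\alpha)t}$.
\item \emph{Bad cells}, with $\htop(y_i)>\rho^{-1}$. We bound each cell's contribution trivially by $\|\varphi\|_\infty\|f_i\|_{L^1}+|m_X(\varphi)|\,m_H(f_i)\ll_\varphi\|F\|_\infty$.
\end{itemize}

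It remains to count the bad cells, and this is the step I expect to be the main obstacle. We must convert the measure bound into a count, at the scale of the cells, of points $y_i$ with large height. If $\htop(y_i)>\rho^{-1}$, then by \eqref{eq:height-compact-change} every $u\in B_{1/10}^H$ satisfies $\htop(uy_i)\gg\rho^{-1}$. Now $uy_i=\lambda_q(\lambda_{-q}(u)w_i)y'$ with $\lambda_{-q}(u)w_i\in B_3^H$, and the map $u\mapsto\lambda_{-q}(u)w_i$ scales Haar measure by $J_q^{-1}$. By disjointness in Lemma~\ref{lem:partition}(iii), the bad cells therefore produce disjoint subsets of $\{w\in B_3^H:\htop(\lambda_q(w)y')\gg\rho^{-1}\}$, each of measure $\gg J_q^{-1}$. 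Proposition~\ref{prop:quantitative-nondivergence} then gives
\[
 \#\{\text{bad } i\}\ll_\varphi J_q\rho^{\kappa_1}.
\]
Consequently the normalized bad contribution is $\ll\|F\|_{C^\ell}e^{-\kappa_1\alpha t}$.

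Finally, choose $\alpha=\delta_1/(2\omega)$. With this choice, $\delta_2=\min\{\delta_1/2,\kappa_1\alpha\}$ proves \eqref{eq:compact-return-average}.
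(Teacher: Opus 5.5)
Your proposal is correct and follows essentially the paper's proof. It uses the same partition from Lemma~\ref{lem:partition} and the same split of cells by whether $\htop(g_iy)$ exceeds $e^{\theta t}$ (your $e^{\alpha t}$), with effective equidistribution on good cells and bad cells counted via the disjoint balls $B_{1/10}^Hg_i$, \eqref{eq:height-compact-change}, and Proposition~\ref{prop:quantitative-nondivergence} at base point $h_*y\in\supp\varphi$ with $R=3$. The remaining differences are cosmetic: you rescale each cell by $u\mapsto\lambda_{-q}(u)w_i$ instead of rescaling the whole union inside $\lambda_q(B_2^H)$, you take $\alpha=\delta_1/(2\omega)$ instead of balancing the exponents with $\theta=\delta_1/(\omega+\kappa_1)$, and your support caveat is unnecessary because Lemma~\ref{lem:partition}(ii) already gives $\supp f_i\subset B_1^H$.
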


\begin{proof}
Let
\[
 \{(g_i,\psi_i):i\in\mathcal I_q\}
\]
be given by Lemma~\ref{lem:partition}.  Since
$\sum_i\psi_i=1$ on $\supp F$ and $\int_HF\dd m_H=J_q$,
\begin{align*}
 &\frac{1}{J_q}\int_HF(h)\varphi(a^thy)\dd m_H(h)-m_X(\varphi)\\
 &\quad=\frac{1}{J_q}\sum_{i\in\mathcal I_q}
 \int_HF(h)\psi_i(h)
 \bigl(\varphi(a^thy)-m_X(\varphi)\bigr)\dd m_H(h).
\end{align*}
Fix $\theta>0$, to be chosen at the end, and split $\mathcal I_q$ into
\[
 \mathcal G =\{i\in\mathcal I_q:\htop(g_i y)\leq e^{\theta t}\}, \qquad \mathcal B=\mathcal I_q\setminus\mathcal G.
\]
We estimate the $\mathcal G$-sum by effective equidistribution and the $\mathcal B$-sum by quantitative non-divergence.

First let $i\in\mathcal G$ and put
\[
 f_i(u)=F(ug_i)\psi_i(ug_i).
\]
By Lemma~\ref{lem:partition}, the right-translation identity in
Lemma~\ref{lem:conjugation-adjoint}(iii), and the product rule,
\[
 \supp f_i\subset B_1^H, \qquad \|f_i\|_{C^\ell}\ll_\ell\|F\|_{C^\ell}.
\]
The change of variables $h=ug_i$ and Proposition~\ref{prop:effective-equidistribution} give
\begin{align*}
 &\left| \int_HF(h)\psi_i(h) \left(\varphi(a^thy)-m_X(\varphi)\right)\dd m_H(h) \right|\\
 &=\left|
 \int_Hf_i(u)
 \bigl(\varphi(a^tug_i y)-m_X(\varphi)\bigr)\dd m_H(u)
 \right|\\
 &\ll
 \max\!\left\{\|\varphi\|_{C^1},\|\varphi\|_{\ell,2}\right\}
 \|f_i\|_{C^\ell}\htop(g_i y)^\omega e^{-\delta_1t}\\
 &\ll_\varphi
 \|F\|_{C^\ell}e^{-(\delta_1-\omega\theta)t}.
\end{align*}
Since $\#\mathcal I_q\asymp J_q$, the contribution of $\mathcal G$ is $\ll_\varphi\|F\|_{C^\ell}e^{-(\delta_1-\omega\theta)t}$.

We now estimate the contribution of $\mathcal B$. Since $0\leq\psi_i\leq1$ and $\supp\psi_i\subset B_1^Hg_i$,
\[
\frac{1}{J_q}\sum_{i\in\mathcal B} \left| \int_HF(h)\psi_i(h) \bigl(\varphi(a^thy)-m_X(\varphi)\bigr)\dd m_H(h) \right|
\ll_\varphi \frac{\|F\|_{\infty} \#\mathcal B}{J_q}.
\]
Thus it remains to estimate $\#\mathcal B$.  Recall that the sets $B_{1/10}^Hg_i$ are pairwise disjoint.
For $i\in\mathcal B$, \eqref{eq:contraction} gives
\[
 B_{1/10}^Hg_i \subset B_1^H\lambda_q(B_1^H) =\lambda_q\left(\lambda_{-q}(B_1^H)B_1^H\right) \subset\lambda_q(B_2^H).
\]
By \eqref{eq:height-compact-change}, there is $c_0>0$, independent of $q,t,y$, such that $\htop(hy)\geq c_0\htop(g_i y)$ for every $h\in B_{1/10}^Hg_i$.
Therefore
\[
 \bigsqcup_{i\in\mathcal B}B_{1/10}^Hg_i
 \subset
 \left\{h\in\lambda_q(B_2^H):
 \htop(hy)>c_0e^{\theta t}\right\}.
\]
Let $ h_*=\lambda_q(u_*)$ for some $u_*\in B_1^H$.
Proposition~\ref{prop:quantitative-nondivergence} with $n=q$, $x=h_*y\in\supp\varphi$, $K=\supp\varphi$, and $R=3$ gives
\begin{align*}
 \#\mathcal B\,m_H(B_{1/10}^H) &\leq m_H\left\{h\in\lambda_q(B_2^H): \htop(hy)>c_0e^{\theta t}\right\}\\
 &=J_qm_H\left\{v\in B_2^Hu_*^{-1}: \htop(\lambda_q(v)h_*y)>c_0e^{\theta t}\right\}\\
 & \ll_\varphi J_qe^{-\kappa_1\theta t}.
\end{align*}
Therefore the contribution of $\mathcal B$ is $\ll_\varphi\|F\|_{\infty}e^{-\kappa_1\theta t}$.

Combining the two estimates, the left-hand side of
\eqref{eq:compact-return-average} is
\[
 \ll_\varphi\|F\|_{C^\ell}
 \left(e^{-\kappa_1\theta t}
 +e^{-(\delta_1-\omega\theta)t}\right).
\]
Now choose
\[
 \theta=\frac{\delta_1}{\omega+\kappa_1},
 \qquad
 \delta_2=\kappa_1\theta
 =\delta_1-\omega\theta
 =\frac{\delta_1\kappa_1}{\omega+\kappa_1}.
\]
This proves \eqref{eq:compact-return-average} for all $t\geq0$.
\end{proof}

\section{The martingale}\label{sec:martingale}

In this section we use the contracting convolutions from
Section~\ref{sec:convolution} to construct a martingale for the
randomized orbit sum.  We estimate its initial conditional expectation and
prove that its martingale differences are uniformly bounded.

Fix $N\geq1$ and $x\in X$.  We work on the product probability space
\[
 \bigl(H^N,\mathcal B(H)^{\otimes N},\nu^{\otimes N}\bigr).
\]
For $0\leq j<N$, let $h_j:H^N\to H$ be the $j$th coordinate projection.
Thus $h_0,\ldots,h_{N-1}$ are independent and have common law $\nu$.
All expectations below are taken on this probability space.  Set $P_0=e$ and,
for $1\leq k\leq N$, define $P_k:H^k\to H$ by
\begin{equation*}
 P_k(u_0,\ldots,u_{k-1})
 =\lambda_{-(k-1)}(u_{k-1})\cdots\lambda_{-1}(u_1)u_0.
\end{equation*}
Thus the arguments are in chronological order.  We use the same notation
for the random variable
\[
 P_k=P_k(h_0,\ldots,h_{k-1}).
\]
For $z\in X$, let
\[
 S_N(z)=\sum_{s=0}^{N-1}\varphi(a^s z).
\]
Let $\cF_0$ be the trivial $\sigma$-algebra and, for $1\leq k\leq N$, let
\[
 \cF_k=\sigma(h_0,\ldots,h_{k-1}).
\]
Thus $\cF_k$ records the first $k$ random variables, and $P_k$ is $\cF_k$-measurable.  Define
\[
 X_k=\E\bigl(S_N(P_Nx)\mid\cF_k\bigr),
 \qquad 0\leq k\leq N.
\]
Hence, $X_N=S_N(P_Nx)$, and for $0\leq k<N$
\[
 X_k
 =\int_{H^{N-k}}
 S_N\bigl(P_N(h_0,\ldots,h_{N-1})x\bigr)
 \dd\nu (h_{N-1})\cdots \dd\nu (h_k).
\]
The tower property gives
\[
 \E(X_{k+1}\mid\cF_k)=X_k,
 \qquad 0\leq k<N,
\]
so $(X_k,\cF_k)_{k=0}^N$ is a martingale.

\subsection{The initial conditional expectation}

\begin{prop}\label{prop:initial}
\begin{equation*}
 |X_0-Nm_X(\varphi)|
 \ll_\varphi\bigl(1+\log\htop(x)\bigr).
\end{equation*}
\end{prop}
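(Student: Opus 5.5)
We expand
\[
 X_0=\sum_{s=0}^{N-1}\int_H\varphi(a^shx)\dd\mu_0^{(N)}(h)
 =\sum_{s=0}^{N-1}\int_H\varphi(\lambda_s(h)a^sx)\dd\mu_0^{(N)}(h)
 =\sum_{s=0}^{N-1}\int_H\varphi(a^s x')\,\ldots
\]
It is cleaner to write each summand as an integral of $\varphi(a^s h x)$ against the density of $\mu_0^{(N)}$. By Lemma~\ref{lem:uniform-densities} with $k=0$ and $s=0$ (so $q=0$, $J_0=1$), $\mu_0^{(N)}$ has a density $F_0\geq0$ with $\supp F_0\subset B_1^H$, $\int F_0\dd m_H=1$, and $\|F_0\|_{C^\ell}\ll_\ell 1$, uniformly in $N$. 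Thus each summand equals $\int_H F_0(h)\varphi(a^shx)\dd m_H(h)$, and Proposition~\ref{prop:effective-equidistribution} applied with $f=F_0$ and $n=s$ gives
\[
 \Bigl|\int_H F_0(h)\varphi(a^shx)\dd m_H(h)-m_X(\varphi)\Bigr|
 \ll_\varphi \htop(x)^\omega e^{-\delta_1 s}.
\]

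Summing this bound over all $s$ would give an error $\ll\htop(x)^\omega$, which is too weak. So we split at a threshold $s_0=\lceil(\omega/\delta_1)\log\htop(x)\rceil$. For $s<s_0$ we use the trivial bound $|\int F_0\varphi(a^shx)\dd m_H-m_X(\varphi)|\leq 2\|\varphi\|_\infty$, contributing $\ll_\varphi s_0\ll_\varphi 1+\log\htop(x)$. For $s\geq s_0$, the equidistribution bound gives $\htop(x)^\omega e^{-\delta_1 s}\leq e^{-\delta_1(s-s_0)}$, and summing the geometric series contributes $\ll_\varphi 1$. Adding the two ranges yields $|X_0-Nm_X(\varphi)|\ll_\varphi 1+\log\htop(x)$. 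If $N\leq s_0$, only the first range occurs and the same bound holds.

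The only point needing care is the uniformity of the density $F_0$ in $N$, with support in $B_1^H$, so that Proposition~\ref{prop:effective-equidistribution} applies with constants independent of $N$. This is exactly the case $k=s=0$ of Lemma~\ref{lem:uniform-densities}, so we do not expect a genuine obstacle; the logarithmic loss comes precisely from the initial window before the factor $\htop(x)^\omega$ is beaten by the exponential decay.
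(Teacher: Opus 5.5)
Your proposal is correct and follows the paper's proof essentially verbatim: you take the uniformly controlled density of $\mu_0^{(N)}$ from Lemma~\ref{lem:uniform-densities} with $k=s=0$, combine Proposition~\ref{prop:effective-equidistribution} with the trivial bound $2\|\varphi\|_\infty$, and split the sum at $\lceil(\omega/\delta_1)\log\htop(x)\rceil$. The only blemish is the abandoned first display ending in ``$\ldots$'', which should simply be deleted.
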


\begin{proof}
By the definition of $X_0$, writing
$\mathbf h=(h_0,\ldots,h_{N-1})$, and recalling the definitions of $\nu_j$
and convolution from Section~\ref{sec:convolution}, we have
\begin{align*}
 X_0
 &=\int_{H^N}S_N(P_N(\mathbf h)x)
 \dd\nu^{\otimes N}(\mathbf h)\\
 &=\int_HS_N(hx)
 \dd(\nu_{N-1}*\cdots*\nu_0)(h)\\
 &=\int_HS_N(hx)\dd\mu_0^{(N)}(h).
\end{align*}
Lemma~\ref{lem:uniform-densities}, applied with $k=s=0$, and hence $q=0$ and $J_q=1$, gives a density $F_N$ such that
\[
 \frac{\dd\mu_0^{(N)}}{\dd m_H}(h)=F_N(h),
 \qquad
 \supp F_N\subset B_1^H,
 \qquad
 \|F_N\|_{C^\ell}\ll_\ell1.
\]
Consequently,
\begin{align*}
 X_0-Nm_X(\varphi)
 &=\int_H S_N(hx)F_N(h)\dd m_H(h)-Nm_X(\varphi)\\
 &=\sum_{s=0}^{N-1}\left(
 \int_H\varphi(a^shx)F_N(h)\dd m_H(h)-m_X(\varphi)
 \right),
\end{align*}
Proposition~\ref{prop:effective-equidistribution}, applied with $f=F_N$
and $n=s$, gives
\[
 \left|\int_H\varphi(a^shx)F_N(h)\dd m_H(h)-m_X(\varphi)\right|
 \ll_\varphi\htop(x)^\omega e^{-\delta_1s}.
\]
Since $F_N\dd m_H$ is a probability measure, the same expression is at most
$2\|\varphi\|_\infty$.  Combining these two bounds gives
\[
 \left|\int_H\varphi(a^shx)F_N(h)\dd m_H(h)-m_X(\varphi)\right|
 \ll_\varphi\min\left\{1,\htop(x)^\omega e^{-\delta_1 s}\right\}.
\]
Put
\[
 s_x=\left\lceil\frac\omega{\delta_1}\log\htop(x)\right\rceil.
\]
Since $\htop(x)\geq1$, we have $s_x\geq0$, and the definition of $s_x$ gives
\[
 \htop(x)^\omega e^{-\delta_1s_x}\leq1.
\]
For $0\leq s<s_x$ we use the trivial bound, while for $s\geq s_x$ we use the exponentially decaying bound.  Hence
\begin{align*}
 |X_0-Nm_X(\varphi)|
 &\ll_\varphi\left(s_x
 +\sum_{s=s_x}^{\infty}
 \htop(x)^\omega e^{-\delta_1 s}\right).
\end{align*}
Therefore
\[
 |X_0-Nm_X(\varphi)|
 \ll_\varphi\bigl(1+\log\htop(x)\bigr).
\]
\end{proof}

\subsection{Uniform martingale differences}
The increment $X_{k+1}-X_k$ measures the effect of revealing $h_k$.
Decompose
\[
 X_k=\sum_{s=0}^{N-1}X_k^s,
 \qquad
 X_k^s=\E\bigl(\varphi(a^sP_Nx)\mid\cF_k\bigr).
\]
For $s\leq k$, the dependence on $h_k$ is contracted by $\lambda_{s-k}$.  For $s>k$, we start at the first time at which the remaining variables can give a nonzero contribution and then apply Proposition~\ref{prop:compact-return-average}.  Both contributions are summable.

\begin{prop}\label{prop:bounded-differences}
For every $0\leq k<N$,
\begin{equation}\label{eq:bounded-differences}
 |X_{k+1}-X_k|\ll_\varphi 1
\end{equation}
almost surely.
\end{prop}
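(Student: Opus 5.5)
We bound $|X_{k+1}^s-X_k^s|$ separately for each $s$ and sum over $s$, splitting into past times $s\le k$ and future times $s>k$. Fix $0\le k<N$ and realizations $h_0,\dots,h_{k-1}$, and write $y=P_kx$. The key identity is that, since $P_N=\bigl(\lambda_{-(N-1)}(h_{N-1})\cdots\lambda_{-k}(h_k)\bigr)P_k$ and the hidden factor has law $\mu_k^{(N)}$,
\[
 X_k^s=\int_H\varphi(a^shy)\dd\mu_k^{(N)}(h),
 \qquad
 X_{k+1}^s=\int_H\varphi(a^sh\lambda_{-k}(h_k)y)\dd\mu_{k+1}^{(N)}(h)
\]
for $k\le N-2$. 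When $k=N-1$, the second integral is against $\delta_e$. The second expression is the integral against $\nu_{N-1}*\cdots*\nu_{k+1}*\delta_{\lambda_{-k}(h_k)}$. Thus both $X_k^s$ and $X_{k+1}^s$ are integrals of $\varphi(a^s\,\cdot\,y)$ against measures covered by Lemma~\ref{lem:uniform-densities}, in its original and point-mass forms respectively.

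\emph{Past times $s\le k$.} Put $a^sh=\lambda_s(h)a^s$. The support of $\lambda_s$ applied to either measure lies in $\lambda_{s-k}$ of a set of diameter $\ll1$; both measures are supported in $\lambda_{-k}(B_1^H)$ by the support estimate in the proof of Lemma~\ref{lem:uniform-densities}. By Lemma~\ref{lem:conjugation-adjoint}(ii), the elements $\lambda_s(h)$ therefore lie in $B^H_{e^{-\chi(k-s)}}$. Both $X_k^s$ and $X_{k+1}^s$ are averages of $\varphi$ over points within distance $\ll e^{-\chi(k-s)}$ of $a^sy$. Hence
\[
 |X_{k+1}^s-X_k^s|\ll\|\varphi\|_{\Lip}e^{-\chi(k-s)},
\]
and these bounds are summable over $s\le k$.

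\emph{Future times $s>k$.} Write $s=k+q+t$. Both $X_k^s$ and $X_{k+1}^s$ equal
\[
 J_{q'}^{-1}\int_HF(h)\varphi\bigl(a^{s-q'-k}h\,a^{k+q'}y\bigr)\dd m_H(h),
\]
where $F$ is the density from Lemma~\ref{lem:uniform-densities} for the pushforward by $\lambda_{k+q'}$. Here $y'=a^{k+q'}y$ is the base point, the integrand is $\varphi(a^th\,y')$, and $F$ satisfies $\|F\|_{C^\ell}\ll1$ and $\supp F\subset\lambda_{q'}(B_1^H)$. Let $q_0\ge0$ be the smallest $q'$ for which some $h_*\in\lambda_{q'}(B_1^H)$ has $h_*a^{k+q'}y\in\supp\varphi$. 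If no such $q'\le s-k$ exists, both integrals vanish, because the supports of both measures pushed by $\lambda_s$ lie in $\lambda_{s-k}(B_1^H)$. If it exists, we apply Proposition~\ref{prop:compact-return-average} with $q=q_0$, base point $a^{k+q_0}y$ and $t=s-k-q_0$ to both $X_k^s$ and $X_{k+1}^s$, obtaining
\[
 |X^s_{k+1}-X^s_k|\ll_\varphi e^{-\delta_2(s-k-q_0)}.
\]
For $k<s<k+q_0$, both terms vanish by minimality of $q_0$. To justify this, note that $\lambda_{s-k}(B_1^H)a^sy=a^{s-k}\cdot$, and use the point-mass supports, which are also contained in $\lambda_{s-k}(B_1^H)$. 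The same $q_0$ serves both terms, since it is defined using only $y$ and the common support bound. Summing over $t\ge0$ gives a bound $\ll_\varphi1$.

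Adding the past and future contributions yields \eqref{eq:bounded-differences}. I expect the main obstacle to be the future-time bookkeeping. One must check that a single first-return index $q_0$ works simultaneously for the conditional measures with and without $h_k$ revealed, so that the terms below $q_0$ vanish identically. One must also check that the support containments $\lambda_{q}(B_1^H)$ from Lemma~\ref{lem:uniform-densities} match the hypotheses of Proposition~\ref{prop:compact-return-average} with the normalization $\int F=J_q$.
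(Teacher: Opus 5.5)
Your proof is correct and follows the paper's argument: the same split at $s=k$, Lipschitz continuity plus contraction for past times, and for future times a first-return index at which Lemma~\ref{lem:uniform-densities} rescales both conditional measures, so that Proposition~\ref{prop:compact-return-average} applies to each with a common witness $h_*$. The differences are cosmetic. For $s\le k$ you compare both $X_k^s$ and $X_{k+1}^s$ with the reference value $\varphi(a^sP_kx)$, whereas the paper couples the future coordinates and conjugates by $B$. You also define the first return through the envelope $\lambda_{s-k}(B_1^H)a^sP_kx=a^{s}\lambda_{-k}(B_1^H)P_kx$ rather than through $(\supp\nu)^{N-k}$, which makes the common choice of $q_0$ automatic; this identity is what your truncated justification sentence should state.
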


\begin{proof}

Fix $0\leq k<N$.  Outside a null set, fix a realization $h_0,\ldots,h_k\in\supp\nu$.  Write $\mathbf v=(v_k,\ldots,v_{N-1})$ and $\mathbf u=(u_{k+1},\ldots,u_{N-1})$, using the usual conventions for empty tuples and product measures.  For every $0\leq s<N$,
\begin{align}
 X_k^s
 &=\int_{H^{N-k}}
 \varphi\bigl(a^sP_N(h_0,\ldots,h_{k-1},\mathbf v)x\bigr)
 \dd\nu^{\otimes(N-k)}(\mathbf v),
 \label{eq:Xks-integral}\\
 X_{k+1}^s
 &=\int_{H^{N-k-1}}
 \varphi\bigl(a^sP_N(h_0,\ldots,h_k,\mathbf u)x\bigr)
 \dd\nu^{\otimes(N-k-1)}(\mathbf u).
 \label{eq:Xk1s-integral}
\end{align}
Splitting the orbit times at $s=k$, we obtain
\begin{equation}\label{eq:martingale-difference-split}
 X_{k+1}-X_k
 =
 \sum_{s=0}^{k}(X_{k+1}^s-X_k^s)
 +
 \sum_{s=k+1}^{N-1}(X_{k+1}^s-X_k^s).
\end{equation}
The first sum is controlled by contraction, while the second is controlled by Proposition~\ref{prop:compact-return-average} from the first time at which the integrand can be nonzero.

\emph{Past times.}
For $0\leq s\leq k$, couple \eqref{eq:Xks-integral} and \eqref{eq:Xk1s-integral} using the same future coordinates $\mathbf u=(u_{k+1},\ldots,u_{N-1})$, and let $v$ denote the $k$th coordinate in \eqref{eq:Xks-integral}.  It suffices to bound uniformly in $v$ and $\mathbf u$ the quantity
\begin{equation}\label{eq:function-difference}
|\varphi\left(a^s P_N(h_0,\dots,h_{k-1},h_k,\mathbf u) x\right) - \varphi\left(a^s P_N(h_0,\dots,h_{k-1},v,\mathbf u) x\right) |
\end{equation}
By right invariance and the definition of the quotient metric,
\begin{align*}
&d_X\left(a^s P_N(h_0,\dots,h_{k-1},h_k,\mathbf u) x, a^s P_N(h_0,\dots,h_{k-1},v,\mathbf u) x\right) \\
&\quad \leq d_H\left(a^s P_N(h_0,\dots,h_{k-1},h_k,\mathbf u) P_N(h_0,\dots,h_{k-1},v,\mathbf u)^{-1} a^{-s},e\right)
\end{align*}
Let
\[
 B=\lambda_{s-(N-1)}(u_{N-1})\cdots
   \lambda_{s-(k+1)}(u_{k+1}),
\]
with $B=e$ when $k=N-1$.  Cancelling the common past factors gives
\begin{align*}
&a^s P_N(h_0,\dots,h_{k-1},h_k,\mathbf u) P_N(h_0,\dots,h_{k-1},v,\mathbf u)^{-1} a^{-s} \\
 &\quad=B\lambda_{s-k}(h_kv^{-1})B^{-1}.
\end{align*}
Fix $\chi>0$ as in Lemma~\ref{lem:conjugation-adjoint}(ii).
Since $u_j,h_k,v\in\supp\nu$ and this support is compact, right
invariance and Lemma~\ref{lem:conjugation-adjoint}(ii) give
\[
 d_H(B,e)\ll\sum_{j=k+1}^{N-1}e^{-\chi(j-s)}\ll1,
 \qquad
 d_H\bigl(\lambda_{s-k}(h_kv^{-1}),e\bigr)
 \ll e^{-\chi(k-s)}.
\]
Thus $B$ lies in a fixed compact subset of $H$, and
Lemma~\ref{lem:conjugation-adjoint}(i), applied with $g=B$, yields
\[
 d_H\bigl(B\lambda_{s-k}(h_kv^{-1})B^{-1},e\bigr)
 \ll e^{-\chi(k-s)}.
\]
Using \eqref{eq:function-difference} and integrating as in
\eqref{eq:Xks-integral}--\eqref{eq:Xk1s-integral}, we obtain
\[
 |X_{k+1}^s-X_k^s|  \ll\|\varphi\|_{\Lip}e^{-\chi(k-s)}, \text{ hence}  \quad \sum_{s=0}^k|X_{k+1}^s-X_k^s|\ll_\varphi1.
\]

\emph{Future times.}
Let $s_0$ be the least $k+1\leq s < N$ for which
\begin{equation}\label{eq:first-nonzero-condition} \varphi\bigl( a^sP_N(h_0,\ldots,h_{k-1},\mathbf v)x \bigr)\neq0
 \qquad
 \text{for some }\mathbf v\in(\supp\nu)^{N-k}.
\end{equation}
If no such $s$ exists, put $s_0=N$.  It follows from \eqref{eq:Xks-integral} and \eqref{eq:Xk1s-integral} that
\[
 X_k^s=X_{k+1}^s=0,
 \qquad k<s<s_0.
\]
If $s_0=N$, the second sum in \eqref{eq:martingale-difference-split} is zero, so the past-time estimate proves the proposition.  Hence assume $s_0<N$ and choose $\mathbf v^*=(v_k^*,\ldots,v_{N-1}^*)$ satisfying \eqref{eq:first-nonzero-condition} with $s=s_0$. Recall that $P_k=P_k(h_0,\ldots,h_{k-1})$.  For $s_0\leq s<N$, we have
\begin{equation*}
 a^sP_Nx = a^{s-s_0} \lambda_{s_0}(P_NP_k^{-1}) a^{s_0}P_kx.
\end{equation*}
Hence \eqref{eq:Xks-integral} can be rewritten as
\begin{equation*}
 X_k^s = \int \varphi\Bigl( a^{s-s_0} \lambda_{s_0}\bigl( P_N(h_0,\ldots,h_{k-1},\mathbf v)P_k^{-1} \bigr) a^{s_0}P_kx \Bigr)
 \dd\nu^{\otimes(N-k)}(\mathbf v).
\end{equation*}
The same identity rewrites \eqref{eq:Xk1s-integral}, with
$(h_k,\mathbf u)$ in place of $\mathbf v$.

For the witnessing tuple $\mathbf v^*$ at time $s_0$,
\begin{align*}
 0 &\neq \varphi\bigl( a^{s_0}P_N(h_0,\ldots,h_{k-1},\mathbf v^*)x \bigr)\\
 &= \varphi\Bigl( \lambda_{s_0}\bigl( P_N(h_0,\ldots,h_{k-1},\mathbf v^*)P_k^{-1} \bigr) a^{s_0}P_kx \Bigr).
\end{align*}
Set
\[
 q=s_0-k,\qquad y=a^{s_0}P_kx,\qquad h_*= \lambda_{s_0}\left( P_N(h_0,\ldots,h_{k-1},\mathbf v^*)P_k^{-1} \right).
\]
Since
\[
 P_N(h_0,\ldots,h_{k-1},\mathbf v^*)P_k^{-1} = \lambda_{-(N-1)}(v_{N-1}^*)\cdots \lambda_{-k}(v_k^*) \in\supp\mu_k^{(N)},
\]
it follows from Lemma~\ref{lem:uniform-densities} with $s=s_0$ that
\begin{equation}\label{eq:future-return}
h_*\in\lambda_q(B_1^H) \quad\text{and}\quad h_* y \in \supp\varphi.
\end{equation}
Also, we have that for $s_0\leq s<N$,
\begin{align*}
 X_k^s
 &=\int_H\varphi\bigl(a^{s-s_0}hy\bigr)
 \dd\bigl((\lambda_{s_0})_*\mu_k^{(N)}\bigr)(h),\\
 X_{k+1}^s
 &=\int_H\varphi\bigl(a^{s-s_0}hy\bigr)
 \dd\Bigl((\lambda_{s_0})_*
 \bigl(\mu_{k+1}^{(N)}*
 \delta_{\lambda_{-k}(h_k)}\bigr)\Bigr)(h).
\end{align*}
Lemma~\ref{lem:uniform-densities},
now applied with $s=s_0$, gives $F_k,F_{k+1}\in C_c^\infty(H)$ such that
\begin{align}
 \frac{\dd(\lambda_{s_0})_*\mu_k^{(N)}}{\dd m_H}
 &=J_q^{-1}F_k,
 \label{eq:future-density-k}\\
 \frac{\dd(\lambda_{s_0})_*
 \bigl(
 \mu_{k+1}^{(N)}*
 \delta_{\lambda_{-k}(h_k)}
 \bigr)}{\dd m_H}
 &=J_q^{-1}F_{k+1},
 \label{eq:future-density-k1}
\end{align}
and, for $i\in\{k,k+1\}$,
\begin{equation*}
 F_i\geq0,\qquad
 \supp F_i\subset\lambda_q(B_1^H),\qquad
 \int_HF_i\dd m_H=J_q,\qquad
 \|F_i\|_{C^\ell}\ll_\ell1.
\end{equation*}

Substituting \eqref{eq:future-density-k}--\eqref{eq:future-density-k1}
into the preceding two integrals gives
\begin{equation}\label{eq:future-conditional-averages}
 X_i^s
 =
 \frac1{J_q}\int_H
 F_i(h)\varphi\bigl(a^{s-s_0}hy\bigr)\dd m_H(h),
 \qquad i\in\{k,k+1\}.
\end{equation}
By \eqref{eq:future-return}, the same
$h_*$ verifies the compact-return hypothesis for both integrals in
\eqref{eq:future-conditional-averages}.  Proposition~\ref{prop:compact-return-average}
with $t=s-s_0$ therefore gives
\[
 |X_i^s-m_X(\varphi)|
 \ll_\varphi
 \|F_i\|_{C^\ell}e^{-\delta_2(s-s_0)}
 \ll_\varphi e^{-\delta_2(s-s_0)},
 \qquad i\in\{k,k+1\}.
\]
Hence
\[
 |X_{k+1}^s-X_k^s|
 \ll_\varphi e^{-\delta_2(s-s_0)},
 \qquad s_0\leq s<N,
\]
and therefore
\[
 \sum_{s=k+1}^{N-1}|X_{k+1}^s-X_k^s|
 =
 \sum_{s=s_0}^{N-1}|X_{k+1}^s-X_k^s|
 \ll_\varphi
 \sum_{r=0}^{\infty}e^{-\delta_2r}
 \ll_\varphi1.
\]
Together with the past-time estimate, this proves \eqref{eq:bounded-differences}.
\end{proof}

\section{Proof of the large-deviation theorem}\label{sec:ld-proof}

In this section we apply Azuma's inequality to the martingale constructed in
Section~\ref{sec:martingale}.  We first obtain a sub-Gaussian deviation bound
on a height-truncated subset of $X$, and then use Haar invariance together
with the cusp-tail estimate to deduce the large-deviation estimate for
$m_X$.

\begin{prop}\label{prop:azuma}
Let $(X_k,\cF_k)_{k=0}^N$ be a martingale, and suppose that
$d_0,\allowbreak\ldots,\allowbreak d_{N-1}>0$ satisfy
\[
 |X_{k+1}-X_k|\leq d_k,
 \qquad 0\leq k<N,
\]
almost surely.  Then, for every $t>0$,
\[
 \Prob\{|X_N-X_0|>t\}
 \leq
 2\exp\left(-\frac{t^2}{2\sum_{k=0}^{N-1}d_k^2}\right).
\]
\end{prop}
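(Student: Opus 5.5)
The plan is the standard exponential-moment (Chernoff) argument for martingale differences, built on Hoeffding's lemma. Write $D_k=X_{k+1}-X_k$, so that $\E(D_k\mid\cF_k)=0$ and $|D_k|\leq d_k$ almost surely. First I will establish the conditional Hoeffding lemma: for every $\lambda\in\R$,
\[
 \E\bigl(e^{\lambda D_k}\mid\cF_k\bigr)\leq e^{\lambda^2d_k^2/2}
\]
almost surely. To prove it, I use convexity of $u\mapsto e^{\lambda u}$ on $[-d_k,d_k]$:
\[
 e^{\lambda u}\leq \frac{d_k-u}{2d_k}e^{-\lambda d_k}+\frac{d_k+u}{2d_k}e^{\lambda d_k}.
\]
I then take conditional expectations with $u=D_k$. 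The conditional mean is zero, so the right-hand side becomes $\cosh(\lambda d_k)$. Finally, comparing Taylor series termwise gives $\cosh(y)\leq e^{y^2/2}$.

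Next, I iterate via the tower property. The variables $X_k-X_0$ are $\cF_k$-measurable, so
\[
 \E\, e^{\lambda(X_{k+1}-X_0)}=\E\Bigl(e^{\lambda(X_k-X_0)}\E\bigl(e^{\lambda D_k}\mid\cF_k\bigr)\Bigr)\leq e^{\lambda^2d_k^2/2}\,\E\, e^{\lambda(X_k-X_0)}.
\]
By induction on $k$, this yields $\E\, e^{\lambda(X_N-X_0)}\leq \exp(\lambda^2\Sigma/2)$, where $\Sigma=\sum_{k<N}d_k^2$.

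Then I apply Markov's inequality. For $\lambda>0$,
\[
 \Prob\{X_N-X_0>t\}\leq e^{-\lambda t+\lambda^2\Sigma/2}.
\]
Choosing $\lambda=t/\Sigma$ gives the bound $e^{-t^2/(2\Sigma)}$. The same argument applied to the martingale $(-X_k)$, whose differences are bounded by the same $d_k$, bounds the lower tail. A union bound over the two tails then produces the factor $2$.

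There is no real obstacle here. The only points requiring care are the conditional version of Hoeffding's lemma and the measurability used in the tower step; both are routine. Alternatively, one may simply cite the classical Azuma--Hoeffding inequality.
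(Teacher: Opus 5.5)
Your proposal is correct and matches the paper, which simply cites the classical Azuma inequality; you supply its standard proof (conditional Hoeffding lemma via convexity and $\cosh y\leq e^{y^2/2}$, iteration by the tower property, the exponential Markov bound with $\lambda=t/\Sigma$, and a union bound over the two tails). Working with $X_k-X_0$ also correctly covers a nontrivial $\cF_0$, and the bounded increments supply the integrability needed in the tower step.
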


\begin{proof}
This is the standard Azuma inequality \cite{Azuma1967weighted}.
\end{proof}

\begin{proof}[Proof of Theorem~\ref{thm:large-deviation}]
By Proposition~\ref{prop:initial}, choose $A_\varphi\geq1$, independent of
$N$ and $x$, such that
\[
 |X_0-Nm_X(\varphi)|
 \leq A_\varphi\bigl(1+\log\htop(x)\bigr).
\]
Choose $c_\varphi>0$ sufficiently small that $\frac{2A_\varphi c_\varphi}{\kappa_2}\leq\frac{1}{4}$,
fix $0<\varepsilon<1$, and set
\[
 \mathcal K_{N,\varepsilon}
 =\left\{x:\htop(x)\leq
 \exp\left(\frac{2c_\varphi\varepsilon N}{\kappa_2}\right)\right\}.
\]
For any $N\geq4A_\varphi\varepsilon^{-1}$ and
$x\in \mathcal K_{N,\varepsilon}$,
\begin{align*}
 |X_0-Nm_X(\varphi)|
 &\leq A_\varphi
 \left(1+\frac{2c_\varphi\varepsilon N}{\kappa_2}\right)\\
 &\leq A_\varphi+\frac{\varepsilon N}{4}
 \leq\frac{\varepsilon N}{2}.
\end{align*}
By Proposition~\ref{prop:bounded-differences}, choose $B_\varphi\geq1$,
independent of $N$, $x$, and $k$, such that
\[
 |X_{k+1}-X_k|\leq B_\varphi,
 \qquad 0\leq k<N,
\]
almost surely. Since $X_N=S_N(P_Nx)$, Proposition~\ref{prop:azuma} with $d_k=B_\varphi$ and
$t=\varepsilon N/2$ gives that for every $x\in\mathcal K_{N,\varepsilon}$,
\begin{equation}\label{eq:azuma-fixed-base}
\begin{split}
 \Prob\{|S_N(P_Nx)-Nm_X(\varphi)|>\varepsilon N\}
 &\leq \Prob\{|X_N-X_0|>\varepsilon N/2\}\\
 &\leq 2\exp\left(-\frac{\varepsilon^2N}{8B_\varphi^2}\right).
\end{split}
\end{equation}
Decrease $c_\varphi$, if necessary, so that $2c_\varphi\leq\frac{1}{8B_\varphi^2}$.
Then
\begin{equation}\label{eq:azuma-use}
 \Prob\{|S_N(P_Nx)-Nm_X(\varphi)|>\varepsilon N\}
 \leq2e^{-2c_\varphi\varepsilon^2N}
\end{equation}
for every $N\geq4A_\varphi\varepsilon^{-1}$ and
$x\in\mathcal K_{N,\varepsilon}$.

Let
\[
 E_{N,\varepsilon}
 =\{x:|S_N(x)-Nm_X(\varphi)|>\varepsilon N\}.
\]
For every $\mathbf h\in H^N$, left translation by
$P_N(\mathbf h)$ preserves $m_X$.  Hence Fubini's theorem gives
\begin{align*}
 \int_X\Prob\{P_Nx\in E_{N,\varepsilon}\}\dd m_X(x)
 &=
 \int_{H^N}m_X\bigl(P_N(\mathbf h)^{-1}E_{N,\varepsilon}\bigr)
 \dd\nu^{\otimes N}(\mathbf h)\\
 &=m_X(E_{N,\varepsilon}).
\end{align*}
Splitting this integral over $\mathcal K_{N,\varepsilon}$ and its
complement and using \eqref{eq:azuma-use}, we obtain
\begin{equation}\label{eq:large-deviation-split}
 m_X(E_{N,\varepsilon})
 \leq2e^{-2c_\varphi\varepsilon^2N}
 +m_X(\mathcal K_{N,\varepsilon}^{\,c}).
\end{equation}
Set
\[
 r=\exp\left(-\frac{2c_\varphi\varepsilon N}{\kappa_2}\right).
\]
Proposition~\ref{prop:cusp-tail} yields
\begin{align*}
 m_X(\mathcal K_{N,\varepsilon}^{\,c})
 &\ll r^{\kappa_2}
 =e^{-2c_\varphi\varepsilon N}
 \leq e^{-2c_\varphi\varepsilon^2N},
\end{align*}
where the last inequality uses $0<\varepsilon<1$.  Thus
\eqref{eq:large-deviation-split} gives
\[
 m_X(E_{N,\varepsilon})
 \ll_\varphi e^{-2c_\varphi\varepsilon^2N}.
\]
Let $C\geq1$ denote the fixed implied constant.  After enlarging the
threshold so that $c_\varphi\varepsilon^2N\geq\log C$, we obtain
\[
 m_X(E_{N,\varepsilon})
 \leq e^{-c_\varphi\varepsilon^2N}.\qedhere
\]
\end{proof}

\begin{proof}[Proof of Corollary~\ref{cor:continuous-ld}]
Put $\Phi(x)=\int_0^1\varphi(a_sx)\dd s$. Then
$\Phi\in C_c^\infty(X)$ and $m_X(\Phi)=m_X(\varphi)$.
For $N=\lfloor T\rfloor$, splitting into unit intervals gives
\[
 \int_0^T\bigl(\varphi(a_tx)-m_X(\varphi)\bigr)\dd t
 =\sum_{j=0}^{N-1}\bigl(\Phi(a^jx)-m_X(\Phi)\bigr)
 +O_\varphi(1),
\]
uniformly in $x$. For $T\gg_\varphi\varepsilon^{-2}$, the remainder
is at most $\varepsilon T/2$, so a continuous-time deviation greater
than $\varepsilon$ implies a discrete deviation greater than
$\varepsilon/2$. Applying Theorem~\ref{thm:large-deviation} to $\Phi$
and using $N\geq T/2$ proves the claim.
\end{proof}

Next we prove our application to pointwise equidistribution along horospheres.
\begin{proof}[Proof of Theorem~\ref{thm:pointwise-equidistribution}]
We first prove~(ii).
Take $\varepsilon_N=K\sqrt{\log N/N}$, with $K>0$ chosen so that
$c_\varphi K^2>1$. For all sufficiently large $N$, we have
$0<\varepsilon_N<1$ and $N\gg_\varphi\varepsilon_N^{-2}$, since
$K^2\log N\to\infty$. Corollary~\ref{cor:continuous-ld} therefore gives
\[
 m_X\left\{x:
 \left|\frac1N\int_0^N\varphi(a_tx)\dd t-m_X(\varphi)\right|
 >\varepsilon_N\right\}
 \leq N^{-c_\varphi K^2}.
\]
Since the right-hand side is summable, Borel--Cantelli gives the
desired estimate at integer times for $m_X$-almost every $x$.
For $N=\lfloor T\rfloor\geq1$ and every $x\in X$,
\[
 \left|\frac1T\int_0^T\varphi(a_tx)\dd t
 -\frac1N\int_0^N\varphi(a_tx)\dd t\right|
 \leq\frac{2\|\varphi\|_\infty}{T}.
\]
Since $N\asymp T$, the same estimate holds for all sufficiently large
real $T$, proving assertion~(ii).

To prove assertion~(i), put
$\Phi(x)=\int_0^1\varphi(a_sx)\dd s$. As in the proof of
Corollary~\ref{cor:continuous-ld}, we have
$\Phi\in C_c^\infty(X)$, $m_X(\Phi)=m_X(\varphi)$, and
\[
 \int_0^N\varphi(a_tx)\dd t
 =\sum_{j=0}^{N-1}\Phi(a^jx).
\]

Fix $x\in X$ and apply the martingale of
Section~\ref{sec:martingale} to $\Phi$.  By Propositions~\ref{prop:initial}
and~\ref{prop:bounded-differences}, choose $A_\Phi,B_\Phi\geq1$,
independent of $N$ and $x$, such that
\[
 |X_0-Nm_X(\Phi)|\leq A_\Phi(1+\log\htop(x)),
 \qquad |X_{k+1}-X_k|\leq B_\Phi.
\]
The condition $N\varepsilon\geq2A_\Phi(1+\log\htop(x))$
therefore ensures that $|X_0-Nm_X(\Phi)|\leq N\varepsilon/2$.
For the convolution measure $\mu_0^{(N)}$ defined in
Section~\ref{sec:convolution}, the definition of $P_N$ gives
\[
 (P_N)_*\nu^{\otimes N}
 =\nu_{N-1}*\cdots*\nu_0=\mu_0^{(N)}.
\]
Under this condition, repeating the Azuma argument for
\eqref{eq:azuma-fixed-base} with $\varphi=\Phi$ and using
this pushforward identity, we obtain

\begin{equation}\label{eq:pointwise-randomized-tail}
 \mu_0^{(N)}\left\{h:
 \left|\frac1N\sum_{j=0}^{N-1}\Phi(a^jhx)-m_X(\Phi)\right|
 >\varepsilon\right\}
 \leq2\exp\left(-\frac{\varepsilon^2N}{8B_\Phi^2}\right).
\end{equation}
Since $x$ is fixed and
$N\varepsilon_N=K\sqrt{N\log N}\to\infty$, we have
$N\varepsilon_N\geq2A_\Phi(1+\log\htop(x))$
for all sufficiently large $N$.

We need to compare $\mu_0^{(N)}$ with $m_H$ on a fixed open set.
Let $F_N=\dd\mu_0^{(N)}/\dd m_H$. By
Lemma~\ref{lem:uniform-densities}, the $F_N$ have integral $1$,
support in $B_1^H$, and uniformly bounded $C^1$ norms.
Since $\mu_0^{(N+1)}=\nu_N*\mu_0^{(N)}$, we have
\[
 F_{N+1}(h)=\int_HF_N(u^{-1}h)\dd\nu_N(u).
\]
The uniform $C^1$ bound and $d_H(u^{-1}h,h)=d_H(u,e)$ give, by
Lemma~\ref{lem:conjugation-adjoint}(ii),
\[
 \|F_{N+1}-F_N\|_\infty
 \ll\sup_{u\in\supp\nu_N}d_H(u,e)
 \ll e^{-\chi N}.
\]
Summing this bound, choose $N_0$ large enough that
$\|F_N-F_{N_0}\|_\infty<1/4$ for all $N\geq N_0$.
Since $m_H(B_1^H)=1$, we have $\max F_{N_0}\geq1$.
By continuity, choose a nonempty relatively compact open set $V$
on which $F_{N_0}>3/4$. Then, for every $h\in V$ and $N\geq N_0$,
\[
 F_N(h)\geq\tfrac12.
\]

Combining this lower bound with \eqref{eq:pointwise-randomized-tail},
we obtain, for fixed $x$,
\[
 m_H\left\{h\in V:
 \left|\frac1N\int_0^N\varphi(a_thx)\dd t-m_X(\varphi)\right|
 >\varepsilon_N\right\}
 \leq 4N^{-K^2/(8B_\Phi^2)}
\]
for all sufficiently large $N$.
Increase $K$, if necessary, so that $K^2/(8B_\Phi^2)>1$.
For each fixed $x$, Borel--Cantelli then gives the desired estimate at
integer times for $m_H$-almost every $h\in V$.
Apply this estimate with base point $h_0x$ for each member $Vh_0$
of a countable cover of $H$. Right invariance of $m_H$ then gives
the estimate for $m_H$-almost every $h\in H$.
The comparison above extends it to all sufficiently large real $T$.
\end{proof}

\section{Entropy and effective uniqueness}\label{sec:entropy}

We derive Theorem~\ref{thm:entropy-main} by combining two estimates for
two-sided Bowen balls with Theorem~\ref{thm:large-deviation}.

For $N\geq0$ and $\rho>0$, set
\[
 B_{N,\rho}^G
 =\bigcap_{n=-N}^{N}a^{-n}B_\rho^G a^n.
\]
A two-sided Bowen ball in $X$ is a set of the form $B_{N,\rho}^Gx$, where
$x\in X$.

The Brin--Katok local entropy theorem relates entropy to the asymptotic
mass of Bowen balls on compact metric spaces \cite{BK83local}.
Since $X$ need not be compact, we use Khayutin's two-sided local-entropy
lower bound for second countable metric spaces
\cite[Theorem~A.1]{Khayutin2017Large}.  The argument of
\cite[Lemma~6.4]{Khayutin2017Large} makes this bound uniform on a compact
set of arbitrarily large measure, as follows.

\begin{prop}\label{prop:local-entropy}
Let $\mu$ be an ergodic $a$-invariant probability measure.  For every
$\eta>0$ there are a compact $Y_\eta\subset X$, a radius
$\rho_\eta>0$, and $N_\eta$ such that
$\mu(Y_\eta)>1-\eta$ and, for every $y\in Y_\eta$ and $N\geq N_\eta$,
\begin{equation*}
 \mu(B_{N,\rho_\eta}^Gy)
 \leq e^{-2N(h_\mu(a)-\eta)}.
\end{equation*}
\end{prop}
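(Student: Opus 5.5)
We will derive the statement from Khayutin's two-sided local-entropy lower bound \cite[Theorem~A.1]{Khayutin2017Large}, upgraded to a uniform estimate by an Egorov-type argument as in \cite[Lemma~6.4]{Khayutin2017Large}.

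\emph{Step 1: the pointwise bound.} Since $X$ is a second countable metric space and $\mu$ is ergodic, \cite[Theorem~A.1]{Khayutin2017Large} applies. For every $\eta>0$ it gives $\rho_0>0$ such that, for $\mu$-almost every $y$,
\[
 \liminf_{N\to\infty}-\frac1{2N}\log\mu\bigl(B_{N,\rho}^Gy\bigr)\geq h_\mu(a)-\eta/2
\]
for all $0<\rho\leq\rho_0$. The normalization $2N$ matches the number of iterates $-N,\ldots,N$ defining $B_{N,\rho}^G$. We also check that this two-sided ball is comparable to the dynamical ball $\{z: d_X(a^nz,a^ny)<\rho,\ |n|\leq N\}$ used there. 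Right invariance of $d_G$ gives
\[
 d_X(a^ngy,a^ny)\leq d_G(a^nga^{-n},e),
\]
so $B_{N,\rho}^Gy$ is contained in the dynamical ball of radius $\rho$. This containment is the direction needed for an upper bound on $\mu(B^G_{N,\rho}y)$. If $h_\mu(a)=\infty$ is not an issue here, since $h_\mu(a)\le h_{m_X}(a)<\infty$, we proceed as above.

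\emph{Step 2: uniformity.} Fix $\rho_\eta=\rho_0$. For each $M$, consider the set
\[
 A_M=\bigl\{y:\ \mu(B_{N,\rho_\eta}^Gy)\leq e^{-2N(h_\mu(a)-\eta)}\ \text{for all }N\geq M\bigr\}.
\]
These sets increase in $M$, and by Step 1 their union has full measure, because $h_\mu(a)-\eta<h_\mu(a)-\eta/2$. To apply inner regularity we need measurability of $y\mapsto\mu(B_{N,\rho}^Gy)$. To arrange this, we replace $B_\rho^G$ by its open version, which makes the function lower semicontinuous; alternatively we work with closed balls and upper semicontinuity, adjusting $\rho$ slightly while keeping the inequality direction. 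Then $A_M$ is Borel, and we choose $N_\eta$ with $\mu(A_{N_\eta})>1-\eta/2$. Since $\mu$ is a Radon measure on the Polish space $X$, inner regularity gives a compact $Y_\eta\subset A_{N_\eta}$ with $\mu(Y_\eta)>1-\eta$.

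\emph{Main obstacle.} The hardest point is ensuring that Khayutin's theorem is stated for exactly the two-sided Bowen balls $B_{N,\rho}^Gx$ with the $2N$ normalization, and that it holds on noncompact $X$ without injectivity-radius issues. If his balls are defined through $d_X$, the containment in Step 1 settles this. The semicontinuity and measurability bookkeeping in Step 2 is routine.
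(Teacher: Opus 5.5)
Your proposal is correct and is essentially the paper's argument: the paper simply cites Khayutin's Theorem~A.1 for the two-sided local-entropy lower bound and the uniformization of his Lemma~6.4 (exhaustion by the sets $A_M$ plus inner regularity), exactly as you do. One small point is that if Theorem~A.1 only gives the pointwise statement $\lim_{\rho\to0}\liminf_N$, a single $\rho_0$ valid almost everywhere is not automatic; you can fix this by monotonicity in $\rho$, choosing $\rho_\eta$ so that the bound at scale $\rho_\eta$ holds on a set of measure $>1-\eta/4$, or by ergodicity, since the fixed-scale lower local entropy is $a$-invariant, and either option suffices for your Step~2.
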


\begin{proof}
This follows from \cite[Lemma~6.4 and Theorem~A.1]{Khayutin2017Large}.
\end{proof}

We also require the following estimate for the Haar measure of Bowen balls.
\begin{prop}\label{prop:bowen-volume}
For every compact $Y\subset X$ there is $\rho_Y>0$ such that,
for $0<\rho<\rho_Y$, $N\geq0$, and $y\in Y$,
\begin{equation}\label{eq:bowen-volume}
 m_X(B_{N,\rho}^Gy)
 \asymp_Y\rho^{\dim G}e^{-2Nh_{m_X}(a)}.
\end{equation}
\end{prop}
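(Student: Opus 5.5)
We plan to compute the Haar measure of a two-sided Bowen ball by lifting it to $G$. The first step is to describe $B_{N,\rho}^G$ in coordinates adapted to $a$. Write $\mathfrak g=\mathfrak h^-\oplus\mathfrak g^0\oplus\mathfrak h$, where $\mathfrak h^-$ is the contracting algebra ($|\beta|<1$), $\mathfrak g^0$ is the neutral part ($|\beta|=1$), and $\mathfrak h$ is the expanding algebra. Let $H^-,G^0$ denote the corresponding subgroups; here $G^0$ is the centralizer-type part on which $\Ad(a)$ has eigenvalues $\pm1$.

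The multiplication map $H^-\times G^0\times H\to G$ is a diffeomorphism near $e$. Using this, we will show that for $\rho$ small there are constants $c<1<C$ with
\[
 \bigl(\lambda_N(B_{c\rho}^{H^-})\bigr)\,B_{c\rho}^{G^0}\,\lambda_{-N}(B_{c\rho}^H)
 \subset B_{N,\rho}^G\subset
 \bigl(\lambda_N(B_{C\rho}^{H^-})\bigr)\,B_{C\rho}^{G^0}\,\lambda_{-N}(B_{C\rho}^H).
\]
The convention is that conjugation by $a^{\pm n}$ expands or contracts each factor. Because $\Ad(a)$ is diagonalizable with orthogonal eigenspaces, $a^n g a^{-n}\in B_\rho^G$ for $|n|\le N$ forces the $H$-component to lie in $\lambda_{-N}$ of a ball of radius $\asymp\rho$, and the $H^-$-component to lie in $\lambda_{N}$ of such a ball. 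The $G^0$-component stays bounded, since $\Ad(a)^2$ acts trivially there.

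\textbf{This is the main technical step.} We will do it in exponential coordinates: write $g=u^-g^0u$ and use that conjugation by $a^n$ acts factorwise. We then need an inductive or continuity argument showing that the product decomposition remains in the small neighborhood for all $|n|\le N$. We will try the standard argument: if $a^nga^{-n}\in B_\rho$ for all $|n|\le N$, then the decomposition of $a^nga^{-n}$ equals the conjugated decomposition of $g$, with each factor lying in $B_{C\rho}$ by uniqueness of the decomposition near $e$.

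Next we compute the measure. Haar measure on $G$ is locally the product of the Haar measures on the three factors, with a smooth Jacobian that is bounded above and below near $e$. The volume of the sets above is therefore
\[
 \asymp \rho^{\dim\mathfrak h^-}J^-_N\cdot\rho^{\dim\mathfrak g^0}\cdot\rho^{\dim\mathfrak h}J_N^{-1},
\]
where $J_N^{-1}=|\det\Ad(a)^{-N}|_{\mathfrak h}|$. By unimodularity of $G$, $|\det\Ad(a)|_{\mathfrak h^-}|=J_1^{-1}$, so the total is $\asymp\rho^{\dim G}J_1^{-2N}$. By the Pesin/Margulis–Ruelle entropy formula for Haar measure, $h_{m_X}(a)=\log J_1$, which gives the factor $e^{-2Nh_{m_X}(a)}$.

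Finally we pass to $X$. Choose $\rho_Y$ smaller than $\tfrac1{C}\inf_{y\in Y}\inj(y)$; this is positive by compactness of $Y$ and \eqref{eq:height-compact-change}. Then $g\mapsto gy$ is injective on $B_{N,\rho}^G\subset B_{C\rho}^G$ and maps Haar measure to a constant multiple of $m_X$, normalized by the covolume. Hence $m_X(B_{N,\rho}^Gy)\asymp m_G(B_{N,\rho}^G)$, with constants depending only on $Y$, which proves \eqref{eq:bowen-volume}.
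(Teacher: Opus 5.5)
Your argument is correct, but it uses a different chart from the paper's.

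\textbf{The paper's route.} The paper never introduces $H^-$ or $G^0$. It works in a single exponential chart on $\mathfrak g$, where conjugation is linear: $a^n\exp(v)a^{-n}=\exp(\Ad(a^n)v)$. Injectivity of $\exp$ on a neighborhood $U$ containing the $\Ad(a^{\pm1})$-images of a small ball gives $a^nga^{-n}=\exp(\Ad(a^n)v)$ for all $|n|\leq N$, proved one step $n\to n\pm1$ at a time. This sandwiches $B^G_{N,\rho}$ between $\exp$ of the eigenspace boxes $\{\sum_\beta v_\beta:\|v_\beta\|\leq c\rho e^{-N|\log|\beta||}\}$ and the same boxes with $C$ in place of $c$. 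The volume is then a linear-algebra computation, $\rho^{\dim G}\exp(-N\sum_\beta\dim(E_\beta)|\log|\beta||)$. It is converted to $e^{-2Nh_{m_X}(a)}$ using $\sum_\beta\dim(E_\beta)\log|\beta|=0$.

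\textbf{Your route and what each buys.} You use the local product chart $H^-\times G^0\times H\to G$, on which conjugation acts factorwise by automorphisms. You get a genuine product of subgroup balls. The volume then follows from the local product structure of Haar measure and the exact scalings $m_H(\lambda_{-N}(E))=J_N^{-1}m_H(E)$ and $m_{H^-}(\lambda_N(E))=J_N^{-1}m_{H^-}(E)$, the latter by unimodularity of $G$. This matches the horospherical notation used elsewhere in the paper and makes the identification $h_{m_X}(a)=\log J_1$ transparent. The cost is extra verification that the paper's linear chart avoids:
\begin{enumerate}[label=(\roman*)]
\item the inverse function theorem for the product map;
\item a Jacobian comparison for thin product sets;
\item the fact that $\lambda_{\pm1}$ is an isometry of $G^0$, because the eigenvalues there are $\pm1$ and the eigenspaces are orthogonal. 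This is a more precise reason than ``stays bounded'' for why the $G^0$-component remains in $B^{G^0}_{C\rho}$.
\end{enumerate}

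\textbf{The delicate step.} It is the same in both versions. You cannot invoke uniqueness of the decomposition directly at time $n$, because you do not know a priori that $(\lambda_n(u^-),\lambda_n(g^0),\lambda_n(u))$ lies in the injectivity domain. You must propagate from $n$ to $n\pm1$, using a smaller neighborhood $W'$ with $\lambda_{\pm1}(W')$ inside that domain, exactly as the paper does with $U$. You flagged this, and it goes through.

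A minor point: $B^G_{N,\rho}\subset B^G_\rho$ already, so the factor $1/C$ in your choice of $\rho_Y$ is unnecessary.
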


\begin{proof}
The argument is analogous to \cite[Proposition~B.1]{Khayutin2017Large}; see
also \cite[Lemma~7.2]{EinsiedlerKadyrovPohl2015} in real rank one.

Recall from Subsection~\ref{subsec:horospherical} that
$\mathfrak g=\bigoplus_\beta E_\beta$, with the eigenspaces mutually
orthogonal.
By compactness of $Y$, choose $\rho_Y>0$ so that $\pi_y$ is injective on
$B_{\rho_Y}^G$ for every $y\in Y$.  After decreasing $\rho_Y$, we claim
that there are constants $0<c<C$ such that
\begin{multline*}
 \exp\left\{\sum_\beta v_\beta:
 \|v_\beta\|\leq c\rho e^{-N|\log|\beta||}\right\}
 \subset B_{N,\rho}^G\\
 {}\subset \exp\left\{\sum_\beta v_\beta:
 \|v_\beta\|\leq C\rho e^{-N|\log|\beta||}\right\}.
\end{multline*}

Indeed, since the exponential map is a local diffeomorphism, there are
$0<c_0<C_0$ such that, for all sufficiently small $\rho$,
\[
 \exp(B_0^{\mathfrak g}(c_0\rho))\subset B_\rho^G
 \subset\exp(B_0^{\mathfrak g}(C_0\rho)).
\]
Choose a neighborhood $U$ of $0$ on which $\exp$ is injective, and shrink
$\rho_Y$ so that $B_0^{\mathfrak g}(C_0\rho_Y)$ and its
$\Ad(a^{\pm1})$-images lie in $U$.
Also, $a^n\exp(v)a^{-n}=\exp(\Ad(a^n)v)$.
Writing $v=\sum_\beta v_\beta$, we have
\[
 a^n\exp(v)a^{-n}=\exp\left(\sum_\beta\beta^nv_\beta\right)
\]
and
\[
 \|\Ad(a^n)v\|^2
 =\left\|\sum_\beta\beta^nv_\beta\right\|^2
 =\sum_\beta|\beta|^{2n}\|v_\beta\|^2.
\]
Now let $g\in B_{N,\rho}^G$.  For each $|n|\leq N$, there is a unique
$w_n\in B_0^{\mathfrak g}(C_0\rho)$ such that
\[
 a^nga^{-n}=\exp(w_n).
\]
Put $v=w_0$.
For consecutive $n,n+1\in[-N,N]$,
$\exp(\Ad(a)w_n)=\exp(w_{n+1})$; since
$\Ad(a)w_n,w_{n+1}\in U$ and $\exp|_U$ is injective,
$w_{n+1}=\Ad(a)w_n$.  The same argument with $a^{-1}$ gives
$w_{n-1}=\Ad(a^{-1})w_n$.  Thus $w_n=\Ad(a^n)v$ for all $|n|\leq N$, and
$\sum_\beta|\beta|^{2n}\|v_\beta\|^2<C_0^2\rho^2$ for $n=\pm N$.
Taking $n=N$ for $|\beta|>1$ and $n=-N$ otherwise gives
$\|v_\beta\|\leq C_0\rho e^{-N|\log|\beta||}$, proving the second
inclusion.

Conversely, choose $c=c_0/(2\sqrt{\dim G})$.
If $\|v_\beta\|\leq c\rho e^{-N|\log|\beta||}$ for every $\beta$, then
$\|\Ad(a^n)v\|\leq\sqrt{\dim G}\,c\rho<c_0\rho$ for all $|n|\leq N$,
since $|\beta|^n e^{-N|\log|\beta||}\leq1$ in this range.
Thus
\[
 a^n\exp(v)a^{-n}\in\exp(B_0^{\mathfrak g}(c_0\rho))
 \subset B_\rho^G,\qquad |n|\leq N,
\]
so $\exp(v)\in B_{N,\rho}^G$, proving the first inclusion.

Next, we estimate the Haar measure of these boxes.  Normalize $m_G$ to
induce $m_X$.  On a fixed small exponential chart, Haar measure has a
smooth positive density bounded above and below by positive constants, so
$m_G(\exp E)\asymp\operatorname{vol}_{\mathfrak g}(E)$; see
\cite[proof of Lemma~7.2]{EinsiedlerKadyrovPohl2015}.
After shrinking $\rho_Y$, both eigenspace boxes lie in this chart.

The local Haar density and the injectivity of $\pi_y$ on $B_{\rho_Y}^G$
therefore give
\[
 m_X(B_{N,\rho}^Gy)\asymp_Y
 \rho^{\dim G}\exp\left(-N\sum_\beta
 \dim(E_\beta)|\log|\beta||\right).
\]
Finally, unimodularity gives
\[
 0=\log|\det\Ad(a)|
 =\sum_\beta\dim(E_\beta)\log|\beta|.
\]
Together with the Haar entropy formula \cite[Section~7.8]{EL10}, this yields
\[
 \sum_\beta\dim(E_\beta)|\log|\beta||
 =2\sum_{|\beta|>1}\dim(E_\beta)\log|\beta|
 =2h_{m_X}(a),
\]
which proves \eqref{eq:bowen-volume}.
\end{proof}

\begin{proof}[Proof of Theorem~\ref{thm:entropy-main}]
First assume that $\mu$ is ergodic.
The leafwise entropy bound and the Haar entropy formula give
$h_\mu(a)\leq h_{m_X}(a)$; see
\cite[Theorem~7.6(ii), Theorem~7.9, and Section~7.8]{EL10}.  Put
\[
 \Delta=|m_X(\varphi)-\mu(\varphi)|,
 \qquad
 \tau_\varphi=\frac{1}{4\max\{1,\|\varphi\|_\infty\}}.
\]
If $\Delta=0$, there is nothing to prove.

Shifting the orbit segment by $a^N$ gives
\[
 \frac1{2N+1}\sum_{n=-N}^{N}\varphi(a^nx)
 =\frac1{2N+1}\sum_{j=0}^{2N}\varphi(a^jy),
\]
where $x=a^Ny$.  Hence Haar invariance and
Theorem~\ref{thm:large-deviation} give
\begin{equation}\label{eq:symmetric-ld}
 m_X\left\{x:\left|
 \frac1{2N+1}\sum_{n=-N}^{N}\varphi(a^nx)-m_X(\varphi)
 \right|>r\right\}
 \mathrel{\leq} e^{-c_\varphi r^2N}
\end{equation}
for every $0<r<1$ and $2N+1\gg_\varphi r^{-2}$.

Fix a small $\eta>0$.  Proposition~\ref{prop:local-entropy} gives a compact
set $Y_\eta$ and a radius $\rho_\eta>0$.  By the ergodic theorem, for all
sufficiently large $N$, the set
\[
 Z_N=\left\{x\in Y_\eta:\left|
 \frac1{2N+1}\sum_{n=-N}^{N}\varphi(a^nx)-\mu(\varphi)
 \right|<(1-\tau_\varphi)\Delta\right\}
\]
satisfies $\mu(Z_N)>1-2\eta$.

For all sufficiently small $\rho$, smoothness of inversion and multiplication
near the identity gives
$(B_\rho^G)^{-1}B_\rho^G\subset B_{3\rho}^G$.  Applying this after
conjugation for each $|n|\leq N$ gives
\[
 (B_{N,\rho}^G)^{-1}B_{N,\rho}^G
 \subset B_{N,3\rho}^G.
\]
Choose $\rho>0$, independently of $N$, so small that $3\rho<\rho_\eta$, the
estimate in Proposition~\ref{prop:bowen-volume} holds uniformly on $Y_\eta$,
and $\|\varphi\|_{\Lip}\rho<\tau_\varphi\Delta/2$.

For all sufficiently large $N$, choose a maximal subset
$\Xi_N\subset Z_N$ such that
$B_{N,\rho}^G\xi\cap B_{N,\rho}^G\xi'=\varnothing$ whenever
$\xi\neq\xi'$ in $\Xi_N$.  The enlarged balls $B_{N,3\rho}^G\xi$ cover
$Z_N$.  Indeed, maximality gives, for every $z\in Z_N$, some
$\xi\in\Xi_N$ such that
$B_{N,\rho}^Gz\cap B_{N,\rho}^G\xi\neq\varnothing$.  Writing a point of
intersection as $gz=h\xi$, we obtain
\[
 z=g^{-1}h\xi
 \in(B_{N,\rho}^G)^{-1}B_{N,\rho}^G\xi
 \subset B_{N,3\rho}^G\xi.
\]
Since $3\rho<\rho_\eta$, Proposition~\ref{prop:local-entropy} now gives
\[
 1-2\eta
 <\mu(Z_N)
 \leq\sum_{\xi\in\Xi_N}\mu(B_{N,3\rho}^G\xi)
 \leq |\Xi_N|e^{-2N(h_\mu(a)-\eta)}.
\]
It follows that
\[
 |\Xi_N|\geq(1-2\eta)e^{2N(h_\mu(a)-\eta)}.
\]
Using this estimate, the disjointness of the original balls, and
Proposition~\ref{prop:bowen-volume}, we obtain
\begin{equation}\label{eq:haar-lower}
 \begin{aligned}
 m_X\left(\bigsqcup_{\xi\in\Xi_N}B_{N,\rho}^G\xi\right)
 &\gg_{Y_\eta,\rho}|\Xi_N|e^{-2Nh_{m_X}(a)}\\
 &\gg_{Y_\eta,\rho,\eta}
 e^{-2N(h_{m_X}(a)-h_\mu(a)+\eta)}.
 \end{aligned}
\end{equation}

If $z\in B_{N,\rho}^G\xi$ with $\xi\in\Xi_N$, then
\[
 \left|\frac1{2N+1}\sum_{n=-N}^{N}
 \bigl(\varphi(a^nz)-\varphi(a^n\xi)\bigr)\right|
 \leq\|\varphi\|_{\Lip}\rho.
\]
Since $\xi\in Z_N$, it follows that
\begin{align*}
 \left|\frac1{2N+1}\sum_{n=-N}^{N}\varphi(a^nz)-m_X(\varphi)\right|
 &\geq \Delta
 -\left|\frac1{2N+1}\sum_{n=-N}^{N}\varphi(a^n\xi)-\mu(\varphi)\right|\\
 &\qquad
 -\left|\frac1{2N+1}\sum_{n=-N}^{N}
 \bigl(\varphi(a^nz)-\varphi(a^n\xi)\bigr)\right|\\
 &>\Delta-(1-\tau_\varphi)\Delta-\|\varphi\|_{\Lip}\rho\\
 &>\frac{\tau_\varphi\Delta}{2}.
\end{align*}
Thus the union in \eqref{eq:haar-lower} lies in the exceptional set in
\eqref{eq:symmetric-ld} with $r=\tau_\varphi\Delta/2$.  The choice of
$\tau_\varphi$ ensures that $0<r<1$.  For fixed $\eta$ and $\rho$, comparing
the two estimates, taking logarithms, dividing by $N$, and letting
$N\to\infty$ gives
\[
 c_\varphi\left(\frac{\tau_\varphi\Delta}{2}\right)^2
 \leq2\bigl(h_{m_X}(a)-h_\mu(a)+\eta\bigr).
\]
Letting $\eta\to0$, we obtain
\[
 \Delta^2\ll_\varphi h_{m_X}(a)-h_\mu(a).
\]

For an arbitrary invariant probability measure, write its ergodic
decomposition as $\mu=\int\mu_\xi\dd\tau(\xi)$.  Jensen's inequality, the
ergodic case, and affinity of entropy give
\begin{align*}
 |m_X(\varphi)-\mu(\varphi)|^2
 &\leq\int|m_X(\varphi)-\mu_\xi(\varphi)|^2\dd\tau(\xi)\\
 &\ll_\varphi
 \int\bigl(h_{m_X}(a)-h_{\mu_\xi}(a)\bigr)\dd\tau(\xi)\\
 &=h_{m_X}(a)-h_\mu(a).
\end{align*}
This proves \eqref{eq:entropy-main}.
\end{proof}

\raggedbottom
\providecommand{\bysame}{\leavevmode\hbox to3em{\hrulefill}\thinspace}


\begin{thebibliography}{LMMS24}

\bibitem[Azu67]{Azuma1967weighted}
Kazuoki Azuma, \emph{Weighted sums of certain dependent random variables},
  Tohoku Math. J. (2) \textbf{19} (1967), no.~3, 357--367.

\bibitem[BH26]{BH26}
Timoth{\'e}e B{\'e}nard and Weikun He, \emph{Multislicing and effective
  equidistribution for random walks on some homogeneous spaces}, To appear in
  Ann. of Math. (2), 2026.

\bibitem[Bir31]{Birkhoff1931ergodic}
George~D. Birkhoff, \emph{Proof of the ergodic theorem}, Proc. Natl. Acad. Sci.
  USA \textbf{17} (1931), no.~12, 656--660.

\bibitem[BK83]{BK83local}
M.~Brin and A.~Katok, \emph{On local entropy}, Geometric Dynamics (Rio de
  Janeiro, 1981), Lecture Notes in Math., vol. 1007, Springer, Berlin, 1983,
  pp.~30--38.

\bibitem[Bow75]{Bowen1975Equilibrium}
Rufus Bowen, \emph{Equilibrium states and the ergodic theory of {A}nosov
  diffeomorphisms}, Lecture Notes in Math., vol. 470, Springer-Verlag, Berlin,
  1975.

\bibitem[Cra38]{Cramer1938}
Harald Cram{\'e}r, \emph{Sur un nouveau th{\'e}or{\`e}me-limite de la
  th{\'e}orie des probabilit{\'e}s}, Les sommes et les fonctions de variables
  al{\'e}atoires, Actualit{\'e}s scientifiques et industrielles, vol. 736,
  Hermann, Paris, 1938, Colloque consacr{\'e} {\`a} la th{\'e}orie des
  probabilit{\'e}s, vol. 3, pp.~5--23.

\bibitem[DZ10]{DemboZeitouni2010}
Amir Dembo and Ofer Zeitouni, \emph{Large deviations techniques and
  applications}, 2nd ed., Stochastic Modelling and Applied Probability, vol.~38,
  Springer, Berlin, 2010, Corrected reprint of the second (1998) edition.

\bibitem[EKP15]{EinsiedlerKadyrovPohl2015}
Manfred Einsiedler, Shirali Kadyrov, and Anke~D. Pohl, \emph{Escape of mass and
  entropy for diagonal flows in real rank one situations}, Israel J. Math.
  \textbf{210} (2015), no.~1, 245--295.

\bibitem[EL10]{EL10}
Manfred Einsiedler and Elon Lindenstrauss, \emph{Diagonal actions on locally
  homogeneous spaces}, Homogeneous Flows, Moduli Spaces and Arithmetic, Clay
  Math. Proc., vol.~10, American Mathematical Society, Providence, RI, 2010,
  pp.~155--241.

\bibitem[Kad15]{Kadyrov2015Effective}
Shirali Kadyrov, \emph{Effective uniqueness of {P}arry measure and exceptional
  sets in ergodic theory}, Monatsh. Math. \textbf{178} (2015), no.~2, 237--249.

\bibitem[Kha17]{Khayutin2017Large}
Ilya Khayutin, \emph{Large deviations and effective equidistribution}, Int.
  Math. Res. Not. IMRN (2017), no.~10, 3050--3106.

\bibitem[Kif90]{Kifer1990large}
Yuri Kifer, \emph{Large deviations in dynamical systems and stochastic
  processes}, Trans. Amer. Math. Soc. \textbf{321} (1990), no.~2, 505--524.

\bibitem[KM96]{KM96}
Dmitry~Y. Kleinbock and Gregory~A. Margulis, \emph{Bounded orbits of
  nonquasiunipotent flows on homogeneous spaces}, Sinai's Moscow Seminar on
  Dynamical Systems, Amer. Math. Soc. Transl. Ser. 2, vol. 171, American
  Mathematical Society, Providence, RI, 1996, pp.~141--172.

\bibitem[KM99]{KM1999logarithm}
\bysame, \emph{Logarithm laws for flows on homogeneous spaces}, Invent. Math.
  \textbf{138} (1999), no.~3, 451--494.

\bibitem[KM12]{KM2012effective}
\bysame, \emph{On effective equidistribution of expanding translates
  of certain orbits in the space of lattices}, Number Theory, Analysis
  and Geometry, Springer, New York, 2012, pp.~385--396.

\bibitem[KM20]{KM20}
Dmitry Kleinbock and Shahriar Mirzadeh, \emph{Dimension estimates for the set
  of points with non-dense orbit in homogeneous spaces}, Math. Z. \textbf{295}
  (2020), no.~3--4, 1355--1383.

\bibitem[KS09]{KelmerSarnak2009}
Dubi Kelmer and Peter Sarnak, \emph{Strong spectral gaps for compact quotients
  of products of {$\operatorname{PSL}(2,\mathbb R)$}}, J. Eur. Math. Soc.
  (JEMS) \textbf{11} (2009), no.~2, 283--313.

\bibitem[KSW17]{KSW17}
Dmitry Kleinbock, Ronggang Shi, and Barak Weiss, \emph{Pointwise
  equidistribution with an error rate and with respect to unbounded functions},
  Math. Ann. \textbf{367} (2017), 857--879.

\bibitem[LMMS24]{LMMS2024quantitative}
Elon Lindenstrauss, Gregorii Margulis, Amir Mohammadi, and Nimish~A. Shah,
  \emph{Quantitative behavior of unipotent flows and an effective avoidance
  principle}, J. Anal. Math. \textbf{153} (2024), no.~1, 1--61.

\bibitem[MN08]{MelbourneNicol2008}
Ian Melbourne and Matthew Nicol, \emph{Large deviations for nonuniformly
  hyperbolic systems}, Trans. Amer. Math. Soc. \textbf{360} (2008), no.~12,
  6661--6676.

\bibitem[Par64]{Parry1964Intrinsic}
William Parry, \emph{Intrinsic {M}arkov chains}, Trans. Amer. Math. Soc.
  \textbf{112} (1964), 55--66.

\bibitem[Pol11]{Polo2011Equidistribution}
Fabrizio Polo, \emph{Equidistribution in chaotic dynamical systems}, ProQuest
  LLC, Ann Arbor, MI, 2011, Thesis (Ph.D.)--The Ohio State University.

\bibitem[RS22]{RuehrSarig2022}
Ren{\'e} R{\"u}hr and Omri Sarig, \emph{Effective intrinsic ergodicity for
  countable state {M}arkov shifts}, Israel J. Math. \textbf{251} (2022), no.~2,
  679--735.

\bibitem[R{\"u}h16]{Ruehr2016Effectivity}
Ren{\'e} R{\"u}hr, \emph{Effectivity of uniqueness of the maximal entropy
  measure on {$p$}-adic homogeneous spaces}, Ergodic Theory Dynam. Systems
  \textbf{36} (2016), no.~6, 1972--1988.

\bibitem[Sha16]{Sharp2016Survey}
Richard Sharp, \emph{Large deviations for dynamical systems}, 2016, Extended
  lecture notes for the workshop ``Statistical Properties of Nonequilibrium
  Dynamical Systems,'' SUSTech, Shenzhen, July 2016.

\bibitem[Shi21]{Shi21}
Ronggang Shi, \emph{Expanding cone and applications to homogeneous dynamics},
  Int. Math. Res. Not. IMRN (2021), no.~9, 7060--7095.

\bibitem[SW25]{SolanWieser2025}
Omri~Nisan Solan and Andreas Wieser, \emph{{Birkhoff} generic points on curves
  in horospheres}, Selecta Math. (N.S.) \textbf{31} (2025), no.~5, Paper No.
  105.

\bibitem[You90]{Young1990large}
Lai-Sang Young, \emph{Large deviations in dynamical systems}, Trans. Amer.
  Math. Soc. \textbf{318} (1990), no.~2, 525--543.

\end{thebibliography}
\end{document}